\def\NAT@def@citea{\def\@citea{\NAT@separator}}
\theoremstyle{plain}
\newtheorem{theorem}{Theorem}[section]
\newtheorem{lemma}[theorem]{Lemma}
\newtheorem{proposition}[theorem]{Proposition}
\newtheorem{assumption}[theorem]{Assumption}
\crefname{assumption}{Assumption}{Assumption}
\theoremstyle{definition}
\newtheorem{definition}[theorem]{Definition}
\theoremstyle{remark}
\newtheorem{remark}{Remark}
\newtheorem{notation}{Notation}
\crefname{notation}{Notation}{Notation}
\DeclareMathOperator{\cl}{cl}
\DeclareMathOperator{\argmin}{argmin}
\DeclareMathOperator{\dist}{dist}
\DeclareMathOperator{\defas}{:\!=}
\DeclareMathOperator{\defsa}{=:}
\DeclareMathOperator{\R}{\mathfrak{R}}
\DeclareMathOperator{\interior}{int}
\DeclareMathOperator{\codim}{codim}
\newcommand{\RF}{\mathfrak{R}}			
\newcommand{\RIF}{\mathfrak{R}^{-1}}		
\newcommand{\RepA}[1]{\textcolor{black}{#1}}
\begin{document}


\title{Stabilized SQP Methods in Hilbert Spaces}

\author{\name{Andrian Uihlein\textsuperscript{1}\thanks{Mail: andrian.uihlein@fau.de} and Winnifried Wollner\textsuperscript{2}\thanks{Mail: winnifried.wollner@uni-hamburg.de}}
\affil{\textsuperscript{1}Department of Mathematics, Chair of Applied Mathematics (Continuous Optimization), Friedrich-Alexander Universität Erlangen-Nürnberg (FAU) \\ \textsuperscript{2}Fachbereich Mathematik, 
  Universit\"at Hamburg}
}

\maketitle

\begin{abstract}
Based on techniques by (S.J.~Wright~1998) for finite-dimensional optimization, we investigate a stabilized sequential quadratic programming method for nonlinear optimization problems in infinite-dimensional Hilbert spaces. The method is shown to achieve fast local convergence even in the absence of a constraint qualification, generalizing the results obtained by (S.J.~Wright~1998 and W.W.~Hager~1999) in finite dimensions to this broader setting.
\end{abstract}

\begin{keywords}
stabilized SQP method; infinite dimensional spaces
\end{keywords}

\section{Introduction}
Many research fields, including for example shape and topology optimization \cite{TopOpt,sokolowski1992introduction}, (PDE-)constrained optimal control \cite{borzi2011computational,OptPDE,khapalov2010controllability,Lions:1971,Troeltzsch:2010} or brittle fracture propagation \cite{Allaire2011,BourdinFrancfortMarigot:2008,FrancfortMarigo:1998,neitzel2017optimal,Neitzel2019}, give rise to nonlinear optimization problems of infinite dimension. Due to the prominence of such problems, a variety of solution techniques has been developed over the years, e.g., projected gradient descent methods \cite{phelps1985metric}, sequential quadratic programming (SQP) algorithms \cite{alt1992sequential}, or penalty and barrier methods \cite{gwinner1981penalty}. An overview of various general first and second order methods is given in \cite{OptPDE}.

Given two real Hilbert spaces $X$, $Y$ and a subset $K\subseteq Y$, the general infinite-dimensional optimization problem can be stated as
\begin{equation*}
  \label{P}\tag{P}
  \begin{aligned}
    \min\quad & f(x)\\
\text{s.t.}\quad & G(x)\in K\subseteq Y,
\end{aligned}
\end{equation*}
where $f:X\to\mathbb{R}$ denotes the objective function and $G:X\to Y$ encapsulates the optimization constraints.

In this contribution, we want to focus on SQP methods, which aim to find a Karush-Kuhn-Tucker (KKT) point \cite{Bonnans2000,KKToriginal} of \eqref{P} by iteratively solving quadratic subproblems. Due to the intrinsic connection of the KKT conditions and constraint qualifications, convergence results concerning SQP methods mostly rely on the Robinson constraint qualification \cite{robinson1976stability}, i.e., it is required that a KKT point $x_\ast$ of \eqref{P} satisfies
\begin{equation}
    \label{RCQ}\tag{RCQ}
    0 \in \interior\big(G(x_\ast) + G^\prime(x_\ast)X - K\big),
\end{equation}
where $G^\prime(x_\ast)$ denotes the Fréchet derivative of $G$ at $x_\ast$. In the special case $K=\{0\}$, which corresponds to purely equality constrained optimization problems, Robinson's constraint qualification is satisfied if and only if $G^\prime(x_\ast)$ is surjective. This, however, is a harsh condition and is violated in many applications \cite{neitzel2017optimal,ComplementaryFinite,ComplementaryInfinte}. 

In the finite-dimensional setting, the stabilized SQP algorithm proposed in \cite{wright1998superlinear} reformulates the quadratic subproblems as saddle point problems and augments them by an additional term for the Lagrange multiplier. The proposed method was proven to converge superlinearly under the assumption that MFCQ \cite{MFCQref} is satisfied. Subsequently, it was shown in \cite{hager1999stabilized} that the method in fact converges quadratically without the need for a constraint qualification, as long as the weight for the augmenting term is chosen appropriately. Estimates for said parameter were stated in \cite{Hager1999}. Recently, in \cite{ReviewReference}, a generalized version of the stabilized SQP method was proposed for Banach spaces and global convergence results were established.   

In contrast, following the ideas presented in \cite{hager1999stabilized}, we are able to prove an infinite-dimensional analogue of the local quadratic convergence result, provided that $G^\prime(x_\ast)$ is a Fredholm operator. In the case of purely equality constrained optimization problems, this assumption is much weaker than \eqref{RCQ} and in fact holds trivially in the finite-dimensional setting. Thus, the main result presented in this contribution can be seen as the direct generalization of \cite[Theorem 1]{hager1999stabilized} to infinite-dimensional Hilbert spaces. Furthermore, we generalize the estimates provided for the augmentation parameter as well.

Note that many of the proofs presented in this contribution follow key arguments made in \cite{hager1999stabilized}. While some intermediate results require only minor changes for their generalization to our infinite-dimensional setting, for the sake of self-containedness, we still include the respective arguments and refrain from excessively referencing the reader to \cite{hager1999stabilized}.

\subsection*{Structure of the paper}
We will briefly discuss our notation and the set of assumptions in \Cref{sec:assum}. The stabilized SQP subproblem  and general idea of the stabilized SQP algorithm is presented in \Cref{sec:sqpsub}. Additionally, we provide auxiliary results, following the ideas of~\cite{hager1999stabilized} in the finite dimensional case.
\Cref{sec:localconv} is dedicated to the proof of our main result. We show local quadratic convergence of the stabilized SQP method under the provision that the parameters are suitably chosen. In \Cref{sec:errors}, we present estimates for the choice of important parameters appearing in the stabilized SQP method. Lastly, a simple application is given in \Cref{sec:application}.

\section{Assumptions and notation}
\label{sec:assum}
For real Hilbert spaces $X$ and $Y$, we denote by $\mathfrak{L}(X,Y)$ and $\mathfrak{F}(X,Y)$ the space of bounded linear operators from $X$ to $Y$ and Fredholm operators, respectively. The dual space to $X$ is defined as $X^\ast \defas \mathfrak{L}(X,\mathbb{R})$. Furthermore, $\mathcal{N}(A)$ and $\mathcal{R}(A)$ denote the kernel and range of $A\in\mathfrak{L}(X,Y)$. 

For $A\in\mathfrak{L}(X,Y)$, we denote by $A^\ast$ the adjoint of $A$, i.e., $A^\ast\in\mathfrak{L}(Y^\ast,X^\ast)$ satisfies
\begin{equation*}
  \left\langle y^\ast ,Ax\right\rangle_{Y^\ast,Y} = \left\langle A^\ast y^\ast,
  x\right\rangle_{X^\ast,X}\quad\text{for all }(x,y^\ast)\in X\times Y^\ast,
\end{equation*}
  where $\langle \cdot ,\cdot \rangle_{Y^\ast,Y}$ and $\langle \cdot ,\cdot \rangle_{X^\ast,X}$ denote the respective duality pairings.
Furthermore, we denote by $\R:Y^\ast\to Y$ the Riesz isomorphism between $Y^\ast$ and $Y$, i.e., 
\begin{equation*}
 \left( \R\lambda, y \right)_Y = \left\langle \lambda,y\right\rangle_{Y^\ast,Y} \quad\text{for all }(\lambda,y)\in Y^\ast\times Y,
\end{equation*}
with the inner product $(\cdot,\cdot)_Y$ on $Y$.
Observe that, for all $\lambda\in Y^\ast$, we have
\begin{equation*}
	\Vert\lambda\Vert_{Y^\ast}^2 = \Vert \R \lambda\Vert_{Y}^2 = \left( \R\lambda,\R\lambda\right)_Y = \left\langle \lambda,\R\lambda\right\rangle_{Y^\ast,Y}.
\end{equation*}
Given a subset $M\subseteq X$, we introduce the annihilator
\begin{equation*}
M^\text{An} \defas \left\{ x^\ast \in X^\ast \,:\, \langle x^\ast, x \rangle_{X^\ast,X} = 0 \text{ for all } x \in M \right\},
\end{equation*}
orthogonal complement
\begin{equation*}
M^\perp \defas \left\{ \bar{x}\in X\,:\, (x,\bar{x})_{X} = 0\text{ for all }x\in M\right\}
\end{equation*}
and polar cone
\begin{equation*}
M^\circ \defas \left\{ x^\ast \in X^\ast \,:\, \langle x^\ast, x \rangle_{X^\ast,X} \le 0 \text{ for all } x \in M \right\}.
\end{equation*}
Henceforth, $c$ denotes a generic constant that might take a
different value in each appearance, but does not depend on the
relevant quantities.

Our first set of general assumptions for the optimization problem \eqref{P} can then be stated as follows:
\begin{assumption} \ 
  \begin{enumerate}[label=\textnormal{(A\arabic*)}]
  \item $X$ and $Y$ are real Hilbert spaces. \label{A1}
  \item \RepA{The set $K\subseteq Y$ is a finitely spanned, closed and convex cone, i.e.,} there exist $y_1,\ldots,y_m\in Y$ such that $K=\{\sum_{i=1}^m \mu_i y_i \,:\, \mu_i \ge 0\}$.\label{A2}
  \item $f\colon X\to\mathbb{R}$ and $G\colon X\to Y$ are (locally) twice Lipschitz continuously Fréchet differentiable.\label{A3}
  \end{enumerate}
\end{assumption}
\begin{remark}\label{rem:SpecialCaseK0}
As an important special case to \eqref{P}, consider an equality constrained setting, i.e., $K=\{0\}$. Thus, the set of associated multipliers simplifies to
\begin{equation*}
    \Lambda(x_\ast) = \left\{ \lambda\in Y^\ast\,:\, G^\prime(x_\ast)^\ast\lambda+f^\prime(x_\ast) = 0\right\}.
\end{equation*}


\end{remark}

Introducing the Lagrangian $\mathcal{L}:X\times Y^\ast\to \mathbb{R}$,
\begin{equation*}
    \mathcal{L}(x,\lambda)\defas f(x) +\left\langle \lambda,G(x)\right\rangle_{Y^\ast,Y},
\end{equation*}
we recall the following second-order sufficient condition \cite[Theorem 5.6]{SOSCPaper}:
\begin{definition}\label{Bem:SGC=SOSC}
A local solution $x_\ast\in X$ to~\eqref{P} satisfies the second-order sufficient condition, if there exist $\alpha,\eta >0$ and an associated multiplier $\lambda_\ast\in\Lambda(x_\ast)$ such that
\begin{equation}\label{SOSC}\tag{SOSC}	\left\langle\mathcal{L}^{\prime\prime}_{xx}(x_\ast,\lambda_\ast)d,d\right\rangle_{X^\ast,X}\geq\alpha\Vert d\Vert_X^2\quad\text{for all }d\in \mathcal{C}_\eta(x_\ast), 
\end{equation}
where 
\begin{equation*}
  \mathcal{C}_\eta(x_\ast)\defas\left\{ d\in X\,:\,
    G^\prime(x_\ast)d\in \mathcal{T}\big( K,G(x_\ast)\big),\text{ }\langle
    f^\prime(x_\ast),d\rangle_{X^\ast,X}\leq\eta\Vert d\Vert_X\right\}
\end{equation*}
is the approximate critical cone and 
\begin{equation*}
    \mathcal{T}(K,G(x_*))\defas \left\{ d\in X\, :\, d = \lim_{n\to\infty} \tfrac{x_n - G(x_\ast)}{t_n},\; t_n\searrow 0,\; (x_n)_{n\in\mathbb{N}}\in K\right\} 
\end{equation*}
denotes the (Bouligand) tangent cone.
\end{definition}
\RepA{
Our final assuptions for an optimal solution $x_\ast$ to~\eqref{P} now read as follows:
\begin{assumption} \
     \begin{enumerate}[label=\textnormal{(A\arabic*)}]
     \setcounter{enumi}{3}
  \item We have
    $G^\prime(x_\ast)\in \mathfrak{F}(X,Y)$ and w.l.o.g. $G(x_\ast)=0$. \label{A4}
    \item The following stronger version of~\eqref{SOSC} is satisfied:
    \begin{equation*}
        \left\langle\mathcal{L}^{\prime\prime}_{xx}(x_\ast,\lambda_\ast)d,d\right\rangle_{X^\ast,X}\geq\alpha\Vert d\Vert_X^2
    \end{equation*}
    for all $d\in \mathcal{N}\left(G^\prime(x_\ast)\right)\cup \left\{d\in X\,:\, G^\prime(x_\ast)d\in\operatorname{span}\{y_1,\ldots,y_m\}\right\}$.\label{A5}
  \end{enumerate}
\end{assumption}
\begin{remark}\label{rem:AssuErkl}
As we provide only local convergence results, we can w.l.o.g. assume that $G(x_\ast)=0$, translating $G$ by a constant if necessary.
        Moreover, for all $d\in\mathcal{N}(G^\prime(x_\ast))$, it holds
    \begin{equation*}
    d\in\mathcal{C}_\eta(x_\ast) \Longleftrightarrow \left\langle f^\prime(x_\ast),d\right\rangle_{X^\ast,X} \le \eta\Vert d\Vert_X.
    \end{equation*}
    By definition of $\Lambda(x_\ast)$, we also have
    \begin{equation*}
    0 = \left\langle f^\prime(x_\ast),d\right\rangle_{X^\ast,X} + \left\langle G^\prime(x_\ast)^\ast\lambda_\ast,d\right\rangle_{X^\ast,X} = \left\langle f^\prime(x_\ast),d\right\rangle_{X^\ast,X} + \left\langle \lambda_\ast,G^\prime(x_\ast)d\right\rangle_{Y^\ast,Y}.
    \end{equation*}
    Thus, $\left\langle f^\prime(x_\ast),d\right\rangle_{X^\ast,X} = 0$ for all $d\in\mathcal{N}(G^\prime(x_\ast))$, showing that $\mathcal{N}(G^\prime(x_\ast))\subseteq\mathcal{C}_\eta(x_\ast)$ for all $\eta > 0$. As a consequence, instead of checking~\ref{A5}, it suffices to show that $\eta$ in~\eqref{SOSC} can be chosen such that
    \begin{equation*}
        \eta \ge \Vert G^\prime(x_\ast)^\ast\lambda_\ast\Vert_{X^\ast}.
    \end{equation*}
    Lastly, note that in the special case $K=\{0\}$,~\eqref{SOSC} and~\ref{A5} are equivalent, since $\mathcal{N}(G^\prime(x_\ast))=\mathcal{C}_{\eta}(x_\ast)$, regardless of the choice $\eta>0$.
\end{remark}
}
\section{The stabilized SQP subproblem}
\label{sec:sqpsub}
Assuming~\ref{A1}-\ref{A3}, the generalized auxiliary problem considered in the stabilized SQP algorithm is given by
\begin{equation}\label{StabilizedProblem}
  \min_{x\in Z}\max_{\lambda\in K^\circ}\quad L_k(x,\lambda).
\end{equation}
where
\begin{equation*}
  \begin{aligned}
    L_k(x,\lambda)\defas 
    &\left\langle
      f^\prime(x_k),x-x_k\right\rangle_{Z^\ast,Z}
    +\frac{1}{2}\left\langle\mathcal{L}^{\prime\prime}_{zz}(z_k,\lambda_k)(z-z_k),
      z-z_k\right\rangle_{Z^\ast,Z} \\
    &+\left\langle\lambda,G(z_k)+G^\prime(z_k)(z-z_k)\right\rangle_{Y^\ast,Y}	
    -\frac{\rho_k}{2}\Vert\lambda-\lambda_k\Vert^2_{Y^\ast}					
  \end{aligned}
\end{equation*}
for given $z_k \in Z$, $\lambda_k\in K^\circ$ and $\rho_k \in \mathbb{R}_{>0}$.
The basic outline of the local stabilized SQP method, generalized to an infinite-dimensional setting, is given in \Cref{LocalStabilizedSQPGeneral}.
\begin{algorithm}
  \caption{Generalized local stabilized SQP method}
  \label{LocalStabilizedSQPGeneral}
  \begin{algorithmic}
    \STATE{Choose $z_0\in Z$ and $\lambda_0\in Y$.}
    \FOR{$k=0,1,2,\ldots$ }
    \IF{$(z_k,\lambda_k)$ is a KKT point of~\eqref{P}}
    \STATE STOP\;
    \ELSE
    \STATE{Calculate
      $(z_{k+1},\lambda_{k+1})$ as a solution to~\eqref{StabilizedProblem};}
    \ENDIF
    \ENDFOR
  \end{algorithmic}
\end{algorithm}

Our analysis of \Cref{LocalStabilizedSQPGeneral} is based on the first-order optimality conditions~\cite[Proposition~VI.1.6]{EkelandTemam:1999}
of~\eqref{StabilizedProblem}.
To be precise, every local solution $(z_{k+1},\lambda_{k+1})$ to~\eqref{StabilizedProblem} satisfies
\begin{align}
  	\mathcal{L}^\prime_z(z_k,\lambda_{k+1})+\mathcal{L}^{\prime\prime}_{zz}(z_k,\lambda_k)(z_{k+1}-z_k) &= 0,\label{ErsteKKT} \\
  	G(z_k)+G^\prime(z_k)(z_{k+1}-z_k)-\rho_k\RF(\lambda_{k+1}-\lambda_k) &\in K, \label{ZweiteKKT}\\
  	\lambda_{k+1}&\in K^\circ,\label{DritteKKT}\\
  	G(z_k)+G^\prime(z_k)(z_{k+1}-z_k)-\rho_k\RF(\lambda_{k+1}-\lambda_k) &\in \{\lambda_{k+1}\}^\text{An}.\label{LetzteKKT}
\end{align}
Conversely, if $(z_{k+1},\lambda_{k+1})$ satisfies
\eqref{ErsteKKT}-\eqref{LetzteKKT}, by strong concavity of $L_k(z_{k+1},\cdot)$, $\lambda_{k+1}$ maximizes
$L_k(z_{k+1},\cdot)$, though
$z_{k+1}$ need not be a local minimum of $L_k(\cdot,\lambda_{k+1})$.

We will therefore reformulate these conditions as a variational
inclusion problem, which allows
us to switch our viewpoint according to what is more convenient for a
given situation.
Based on~\eqref{ErsteKKT}-\eqref{LetzteKKT}, the natural candidate for the auxiliary mapping is
\begin{equation*}
  T(z,\lambda,\underline{z},\underline{\lambda}_1,\underline{\lambda}_2)\defas\begin{pmatrix}
    \mathcal{L}^\prime_z(\underline{z},\lambda)
    +\mathcal{L}^{\prime\prime}_{zz}(\underline{z},\underline{\lambda}_1)(z-\underline{z})\\
    G(\underline{z})+G^\prime(\underline{z})(z-\underline{z})-\rho\RF(\lambda-\underline{\lambda}_2)
  \end{pmatrix}\in Z^\ast\times Y,
\end{equation*}
where $z,\underline{z}\in Z$ and
$\lambda,\underline{\lambda}_1,\underline{\lambda}_2\in Y^\ast$. The
resulting inclusion problem is then given by:

Find $(z,\lambda)\in Z\times Y^\ast$ such that for $p\defas( \underline{z},\underline{\lambda}_1,\underline{\lambda}_2)$ we have
\begin{equation} \label{HilfsProblem15}
  T(z,\lambda,p)\in\begin{pmatrix}
    0\\
    K\cap\{\lambda\}^\text{An}
  \end{pmatrix}, \quad \lambda\in K^\circ.
\end{equation}
The following result will be used to analyze the dependency of solutions to~\eqref{HilfsProblem15} with respect to $p$:
\begin{lemma}\label{Lemma2}
  Let $X,Y$ be Banach spaces, $\widetilde{W}\subseteq X$, $w_\ast\in\widetilde{W}$, $\tau >0$ and define
  \begin{equation*}
    W\defas\left\{ x\in \cl\widetilde{W}\,:\, \Vert
      x-w_\ast\Vert_X\leq\tau\right\}.
  \end{equation*}
  Let $F\colon W\rightrightarrows Y$ be a set-valued map and $T\colon W\times P\to
  Y$ for some set $P$.
  Suppose there exists $L\in\mathfrak{L}(X,Y)$, $p_\ast\in P$ with
  $T(w_\ast,p_\ast)\in F(w_\ast)$ and
  constants $\eta,\varepsilon,\gamma$ such that $\varepsilon\gamma<1$,
  $\tau\geq\eta\gamma(1-\varepsilon\gamma)^{-1}$ and such that the following properties hold:
  \begin{enumerate}[label=\textnormal{(L\arabic*)}]
  \item \label{(P1)}$\Vert T(w_\ast,p_\ast)-T(w_\ast,p)\Vert_Y\leq\eta$ for all $p\in P$. 
  \item \label{(P2)}$\Vert T(w_2,p)-T(w_1,p)-L(w_2-w_1)\Vert_Y\leq\varepsilon\Vert w_2-w_1\Vert_X$ for all $w_1,w_2\in W$ and $p\in P$.
  \item \label{(P3)}For some set $\mathcal{N}\supseteq \{T(w,p)-Lw \,:\, w\in W,p\in P\}$ the following problem has a unique solution for each $\psi\in\mathcal{N}$: 
    \begin{equation}\label{SubProblemP3}
      \text{Find $x\in \widetilde{W}$ such that $Lx+\psi\in F(x)$.}
    \end{equation}
    Furthermore, if $x(\psi)$ denotes the solution corresponding to $\psi$, we have
    \begin{equation}\label{AbschP3}
      \Vert x(\psi_2)-x(\psi_1)\Vert_X \leq \gamma\Vert \psi_2-\psi_1\Vert_Y\quad\text{for all }\psi_1,\psi_2\in\mathcal{N}. 
    \end{equation}
  \end{enumerate}
  Then for each $p\in P$, there exists a unique $w=w(p)\in W$ such
  that $T(w,p)\in F(w)$.
  Moreover, for every $p_1,p_2\in P$, we have
  \begin{equation}\label{ErgebnisLem02}
    \Vert w(p_2)-w(p_1)\Vert_X\leq \frac{\gamma}{1-\gamma\varepsilon}\left\Vert T(w(p_1),p_2)-T(w(p_1),p_1)\right\Vert_Y. 
  \end{equation}
\end{lemma}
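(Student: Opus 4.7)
The plan is to turn the parametric generalized equation $T(w,p)\in F(w)$ into a fixed-point problem for the solution operator provided by~\ref{(P3)} and then apply the Banach contraction principle on $W$. Concretely, for fixed $p\in P$ I would define $\Phi_p\colon W\to X$ by $\Phi_p(w)\defas x\bigl(T(w,p)-Lw\bigr)$, where $x(\psi)$ denotes the unique solution of~\eqref{SubProblemP3}; this is well posed since $T(w,p)-Lw\in\mathcal{N}$ for $w\in W$ and $p\in P$ by the containment in~\ref{(P3)}. A fixed point of $\Phi_p$ in $W$ is exactly a $w\in W$ with $T(w,p)\in F(w)$.

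The first step is to check $\Phi_p(W)\subseteq W$. Since $T(w_\ast,p_\ast)\in F(w_\ast)$, uniqueness in~\ref{(P3)} yields $x\bigl(T(w_\ast,p_\ast)-Lw_\ast\bigr)=w_\ast$, so~\eqref{AbschP3} gives, for any $w\in W$,
\begin{equation*}
\|\Phi_p(w)-w_\ast\|_X\le\gamma\bigl\|T(w,p)-Lw-T(w_\ast,p_\ast)+Lw_\ast\bigr\|_Y.
\end{equation*}
Splitting through $T(w_\ast,p)$ and invoking~\ref{(P2)} and~\ref{(P1)} bounds the right-hand side by $\gamma(\varepsilon\tau+\eta)\le\tau$, where the last inequality uses the hypothesis $\tau\ge\eta\gamma/(1-\varepsilon\gamma)$; together with $\Phi_p(w)\in\widetilde W\subseteq\cl\widetilde W$ from~\ref{(P3)}, this yields $\Phi_p(w)\in W$. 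Next, for $w_1,w_2\in W$,~\eqref{AbschP3} and~\ref{(P2)} give
\begin{equation*}
\|\Phi_p(w_2)-\Phi_p(w_1)\|_X\le\gamma\|T(w_2,p)-T(w_1,p)-L(w_2-w_1)\|_Y\le\gamma\varepsilon\|w_2-w_1\|_X,
\end{equation*}
so $\Phi_p$ is a contraction with factor $\gamma\varepsilon<1$. Because $W$ is a closed subset of the Banach space $X$, Banach's fixed-point theorem produces a unique $w(p)\in W$ with $T(w(p),p)\in F(w(p))$.

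Finally, for the sensitivity estimate~\eqref{ErgebnisLem02} I would compare $w(p_1)$ and $w(p_2)$ through $\Phi_{p_2}$: using $w(p_i)=\Phi_{p_i}(w(p_i))$,
\begin{equation*}
\|w(p_2)-w(p_1)\|_X\le\|\Phi_{p_2}(w(p_2))-\Phi_{p_2}(w(p_1))\|_X+\|\Phi_{p_2}(w(p_1))-\Phi_{p_1}(w(p_1))\|_X,
\end{equation*}
where the first term is at most $\gamma\varepsilon\|w(p_2)-w(p_1)\|_X$ by the contraction bound, and the second is at most $\gamma\|T(w(p_1),p_2)-T(w(p_1),p_1)\|_Y$ by~\eqref{AbschP3} (the $Lw(p_1)$ contributions cancel). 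Absorbing the first term into the left-hand side delivers~\eqref{ErgebnisLem02}. The argument is essentially a parametric Banach fixed-point lemma; the only real subtlety is that the two hypotheses $\varepsilon\gamma<1$ and $\tau(1-\varepsilon\gamma)\ge\eta\gamma$ have to be used in tandem with no slack, the former to obtain the contraction and the latter to keep the iterates inside the ball of radius $\tau$ around $w_\ast$.
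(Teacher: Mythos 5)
Your proof is correct and takes essentially the same route as the paper, which for this lemma simply defers to the contraction-mapping argument of \cite[Lemma 2]{hager1999stabilized}: defining $\Phi_p(w)=x(T(w,p)-Lw)$, using (L1)--(L2) together with $\tau\ge\eta\gamma(1-\varepsilon\gamma)^{-1}$ for the self-map property, $\gamma\varepsilon<1$ for the contraction, and the triangle inequality through $\Phi_{p_2}(w(p_1))$ for \eqref{ErgebnisLem02} is exactly that argument, written out. No gaps.
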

\begin{proof}
  The proof provided in~\cite[Lemma 2]{hager1999stabilized} directly
  carries over to the infinite-dimensional setting.
\end{proof}
Next up, we state two auxiliary results. \Cref{Lem:MatrixFredholm} will be used to provide a stability result concerning a subproblem that arises in the analysis of \eqref{HilfsProblem15}. The second result is crucial for the proof that \eqref{HilfsProblem15} has a unique solution (\Cref{Lemma1}).
\begin{lemma}\label{Lem:MatrixFredholm}
  Assume~\ref{A1}. Let $B\in\mathfrak{F}(Z,Y)$. Suppose $Q\in\mathfrak{L}(Z,Z^\ast)$ satisfies
  \begin{equation*}
    \left\langle Qz_1,z_2\right\rangle_{Z^\ast,Z}=\left\langle
      Qz_2,z_1\right\rangle_{Z^\ast,Z}\quad\textnormal{for all
    }z_1,z_2\in Z
  \end{equation*}
  and
  \begin{equation}\label{Gle:MatrixlemmaAnforderung}
    \left\langle Qz,z\right\rangle_{Z^\ast,Z}\geq\alpha\Vert z\Vert_Z^2\quad\textnormal{for all }z\in\mathcal{N}(B)
  \end{equation}
  for some $\alpha>0$.
  
  Then $\mathcal{A}\colon Z\times Y^\ast\to Z^\ast\times Y$, given by
  \begin{equation*}
    \mathcal{A}(z,y)\defas\begin{pmatrix}
      Qz+B^\ast y\\
      Bz
    \end{pmatrix},
  \end{equation*}
  is Fredholm.
\end{lemma}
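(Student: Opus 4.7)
The plan is to show directly that $\mathcal{A}$ has finite-dimensional kernel, closed range, and finite-dimensional cokernel, by exploiting the Fredholm structure of $B$ together with the coercivity hypothesis on $Q$.

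First, I would compute the kernel. If $(z,y)\in\mathcal{N}(\mathcal{A})$, then $Bz=0$ and $Qz+B^\ast y=0$. Pairing the second equation against $z$ and using $\langle B^\ast y,z\rangle_{Z^\ast,Z}=\langle y,Bz\rangle_{Y^\ast,Y}=0$ reduces it to $\langle Qz,z\rangle_{Z^\ast,Z}=0$, and then \eqref{Gle:MatrixlemmaAnforderung} forces $z=0$. Hence $\mathcal{N}(\mathcal{A})=\{0\}\times\mathcal{N}(B^\ast)$, which is finite-dimensional because $B$ is Fredholm.

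Next, I would identify the range. Writing $Z=\mathcal{N}(B)\oplus Z_1$ with $Z_1\defas\mathcal{N}(B)^\perp$, the restriction $B|_{Z_1}\colon Z_1\to\mathcal{R}(B)$ is an isomorphism of Banach spaces by the open mapping theorem. Given $(g,h)\in Z^\ast\times Y$ with $h\in\mathcal{R}(B)$, I would pick the unique $z_1\in Z_1$ satisfying $Bz_1=h$ and then attempt to solve $Qz_N+B^\ast y=g-Qz_1$ for $(z_N,y)\in\mathcal{N}(B)\times Y^\ast$. Since $B$ has closed range, the closed range theorem gives $\mathcal{R}(B^\ast)=\mathcal{N}(B)^{\mathrm{An}}$, so the only compatibility requirement is that $g-Qz_1-Qz_N$ annihilates $\mathcal{N}(B)$. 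This is a finite-dimensional linear equation for $z_N$: find $z_N\in\mathcal{N}(B)$ with
\begin{equation*}
\langle Qz_N,z_N'\rangle_{Z^\ast,Z}=\langle g-Qz_1,z_N'\rangle_{Z^\ast,Z}\quad\text{for all }z_N'\in\mathcal{N}(B).
\end{equation*}
By the symmetry of $Q$ and \eqref{Gle:MatrixlemmaAnforderung}, the associated bilinear form on the finite-dimensional space $\mathcal{N}(B)$ is symmetric and positive definite, so the system has a unique solution. Once $z_N$ is fixed, the compatibility condition is satisfied and $y$ can be recovered from $B^\ast y=g-Qz_1-Qz_N$.

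This shows $\mathcal{R}(\mathcal{A})=Z^\ast\times\mathcal{R}(B)$, which is closed and of codimension $\dim(Y/\mathcal{R}(B))<\infty$ in $Z^\ast\times Y$. Combined with the finite-dimensional kernel from the first step, this proves that $\mathcal{A}$ is Fredholm. The only step requiring care is the identification $\mathcal{R}(B^\ast)=\mathcal{N}(B)^{\mathrm{An}}$, which relies on the closed range theorem and on keeping the dual pairings straight; the rest is essentially a finite-dimensional computation on $\mathcal{N}(B)$.
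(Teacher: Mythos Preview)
Your proof is correct. The kernel computation is identical to the paper's: both pair $Qz+B^\ast y=0$ against $z$, use $Bz=0$ to kill the $B^\ast y$ term, and invoke \eqref{Gle:MatrixlemmaAnforderung} to conclude $\mathcal{N}(\mathcal{A})=\{0\}\times\mathcal{N}(B^\ast)$. For the range, however, the paper simply cites \cite[Theorem~4.2.1]{Boffi2013} (a saddle-point solvability result from Brezzi-type theory) to obtain $\mathcal{R}(\mathcal{A})=Z^\ast\times\mathcal{R}(B)$, whereas you work this out directly via the orthogonal decomposition $Z=\mathcal{N}(B)\oplus\mathcal{N}(B)^\perp$, the closed range theorem, and a finite-dimensional Lax--Milgram argument on $\mathcal{N}(B)$. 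Your route is more self-contained and makes explicit where the Fredholm property of $B$ enters (finite dimensionality of $\mathcal{N}(B)$ for the variational subproblem, closedness of $\mathcal{R}(B)$ for the closed range theorem); the paper's route is shorter but leans on an external reference that essentially packages the same ingredients.
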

\begin{proof}
  It is clear that $\mathcal{A}\in\mathfrak{L}(Z\times Y^\ast,Z^\ast\times
  Y)$.
  Let $(z_0,y_0)\in\mathcal{N}(\mathcal{A})$. Then, we have
  \begin{equation*}
  	\begin{aligned}
    \left\langle Qz_0,z\right\rangle_{Z^\ast,Z}+\left\langle B^\ast y_0,z\right\rangle_{Z^\ast,Z} = 0\quad&\textnormal{for all }z\in Z,\\
    \left\langle y^\ast,Bz_0\right\rangle_{Y^\ast,Y} = 0\quad&\textnormal{for all }y^\ast\in Y^\ast.
	\end{aligned}  
  \end{equation*}
  Especially, we get
  \begin{equation*}
  	\begin{aligned}
    0&=\left\langle Qz_0,z_0\right\rangle_{Z^\ast,Z}+\left\langle B^\ast y_0,z_0\right\rangle_{Z^\ast,Z}\\
    &=\left\langle Qz_0,z_0\right\rangle_{Z^\ast,Z}+\left\langle y_0,Bz_0\right\rangle_{Y^\ast,Y}\\
    &=\left\langle Qz_0,z_0\right\rangle_{Z^\ast,Z}\\
    &\geq \alpha\Vert z_0\Vert_Z^2,
	\end{aligned}  
  \end{equation*}
  where we used~\eqref{Gle:MatrixlemmaAnforderung} in the final line. Therefore, 
  \begin{equation*}
    \mathcal{N}(\mathcal{A})=\{0\}\times\mathcal{N}(B^\ast),
  \end{equation*}
  which yields $\textnormal{dim }\mathcal{N}(\mathcal{A})=\textnormal{ dim }\mathcal{N}(B^\ast)<\infty$, since $B^\ast$ is Fredholm. By~\cite[Theorem~4.2.1]{Boffi2013}, we have
  \begin{equation*}
    \mathcal{R}(\mathcal{A})=Z^\ast\times\mathcal{R}(B),
  \end{equation*}
  which gives $\text{codim }\mathcal{R}(\mathcal{A})<\infty$.
\end{proof}
\begin{lemma}\label{Lem:CoerciveRetten}
  Assume~\ref{A1}. Suppose that $B\in\mathfrak{F}(Z,Y)$ and let $Q\in\mathfrak{L}(Z,Z^\ast)$ satisfy
  \begin{equation*}
    \left\langle Qz_1,z_2\right\rangle_{Z^\ast,Z}=\left\langle
      Qz_2,z_1\right\rangle_{Z^\ast,Z}\quad\text{for all }z_1,z_2\in Z
  \end{equation*}
  and
  \begin{equation}\label{CoerciveAufKern}
    \left\langle Qz,z\right\rangle_{Z^\ast,Z}\geq\alpha\Vert z\Vert_Z^2\quad\text{for all }z\in\mathcal{N}(B).
  \end{equation}
  Then given any $\varepsilon>0$, there exist neighborhoods $\mathcal{B}$ of $B$ and $\mathcal{Q}$ of $Q$ and $\sigma>0$ such that
  \begin{equation*}
    \left\langle \Big(
      \tilde{Q}+\frac{1}{\rho}(\tilde{B})^\ast\RIF\tilde{B}\Big)z,z\right\rangle_{Z^\ast,Z}\geq
    (\alpha-\varepsilon)\Vert z\Vert_Z^2
  \end{equation*}
  for all $z\in Z$, $0<\rho\leq\sigma$, $\tilde{B}\in\mathcal{B}$, and $\tilde{Q}\in\mathcal{Q}$.
\end{lemma}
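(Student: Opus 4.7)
The plan is to argue by contradiction, extracting a sequence on the unit sphere and exploiting the Fredholm structure in the limit. As a preliminary, the defining property of the Riesz isomorphism yields $\langle \tilde{B}^\ast \RIF \tilde{B} z, z\rangle_{Z^\ast,Z} = \|\tilde{B}z\|_Y^2$, so the target inequality reads
\[
\langle \tilde{Q}z, z\rangle_{Z^\ast,Z} + \tfrac{1}{\rho}\|\tilde{B}z\|_Y^2 \geq (\alpha - \varepsilon)\|z\|_Z^2.
\]
A naive perturbation approach — treating $\tfrac{1}{\rho}\big(\|\tilde{B}z\|^2 - \|Bz\|^2\big)$ as a small error — fails, because the $\tfrac{1}{\rho}$-factor in front of the error cannot be absorbed by $\|\tilde{B}-B\|$ uniformly in $\rho$. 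This forces $\mathcal{B}$, $\mathcal{Q}$, and $\sigma$ to be chosen jointly, and motivates the contradiction argument.

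Suppose the statement is false for some $\varepsilon_0 > 0$. Taking nested operator-norm balls of radius $1/n$ around $B$ and $Q$ together with $\sigma_n = 1/n$, I extract sequences $\tilde{B}_n \to B$ in $\mathfrak{L}(Z,Y)$, $\tilde{Q}_n \to Q$ in $\mathfrak{L}(Z,Z^\ast)$, $\rho_n \in (0,1/n]$, and (by positive homogeneity) unit vectors $z_n \in Z$ with
\[
\langle \tilde{Q}_n z_n, z_n\rangle + \tfrac{1}{\rho_n}\|\tilde{B}_n z_n\|_Y^2 < \alpha - \varepsilon_0.
\]
Since $\|\tilde{Q}_n\|$ is uniformly bounded and the penalty term is non-negative, the first term is bounded, and hence $\|\tilde{B}_n z_n\|_Y^2 \to 0$. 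Combined with $\|\tilde{B}_n - B\| \to 0$, this forces $B z_n \to 0$ in $Y$.

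Now I invoke the Fredholm hypothesis. Write $z_n = z_{n,1} + z_{n,2}$ orthogonally with $z_{n,1} \in \mathcal{N}(B)$ and $z_{n,2} \in \mathcal{N}(B)^\perp$. Because $B$ is Fredholm, $\mathcal{R}(B)$ is closed, so $B|_{\mathcal{N}(B)^\perp}\colon \mathcal{N}(B)^\perp \to \mathcal{R}(B)$ is a bounded linear bijection between Banach spaces; the open mapping theorem then produces $c > 0$ with $\|Bw\|_Y \geq c\|w\|_Z$ for every $w \in \mathcal{N}(B)^\perp$. Applied to $z_{n,2}$ this gives $\|z_{n,2}\|_Z \to 0$ and hence $\|z_{n,1}\|_Z \to 1$. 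Expanding
\[
\langle \tilde{Q}_n z_n, z_n\rangle = \langle \tilde{Q}_n z_{n,1}, z_{n,1}\rangle + 2\langle \tilde{Q}_n z_{n,1}, z_{n,2}\rangle + \langle \tilde{Q}_n z_{n,2}, z_{n,2}\rangle,
\]
the latter two terms vanish in the limit by $\|z_{n,2}\| \to 0$ together with the uniform bound on $\|\tilde{Q}_n\|$, while the dominant term satisfies $\langle \tilde{Q}_n z_{n,1}, z_{n,1}\rangle \geq (\alpha - \|\tilde{Q}_n - Q\|)\|z_{n,1}\|^2 \to \alpha$ by \eqref{CoerciveAufKern}. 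Together with non-negativity of the penalty, $\liminf_n\big[\langle \tilde{Q}_n z_n, z_n\rangle + \tfrac{1}{\rho_n}\|\tilde{B}_n z_n\|^2\big] \geq \alpha$, which contradicts the strict upper bound $\alpha - \varepsilon_0$. The decisive (and only truly non-trivial) step is the Fredholm/open-mapping inequality on $\mathcal{N}(B)^\perp$, since this is precisely what converts the information $Bz_n \to 0$ into concentration of the unit sequence on $\mathcal{N}(B)$, where the hypothesis \eqref{CoerciveAufKern} becomes available.
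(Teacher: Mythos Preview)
Your argument is correct and takes a genuinely different route from the paper. The paper proceeds constructively: it decomposes $z=u+v\in\mathcal{N}(B)\oplus\mathcal{N}(B)^\perp$, bounds the cross term $2\langle Qv,u\rangle$ via Young's inequality, and chooses $\sigma$ small enough that the $\tau^2/\rho$ contribution on $\mathcal{N}(B)^\perp$ dominates; the passage to $(\tilde Q,\tilde B)$ is then handled by continuity of $(\tilde Q,\tilde B)\mapsto \tilde Q+\sigma^{-1}\tilde B^\ast\RIF\tilde B$ \emph{at the fixed value} $\rho=\sigma$, followed by the monotonicity observation that decreasing $\rho$ below $\sigma$ only enlarges the penalty term. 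Your contradiction argument instead absorbs all three perturbations---in $Q$, in $B$, and in $\rho$---into a single limiting sequence, which is slicker and avoids the explicit Young-inequality bookkeeping, at the price of being non-constructive (no explicit $\sigma$ or radii emerge). Both proofs rest on exactly the same Fredholm input, namely the closed-range lower bound $\Vert Bw\Vert\ge c\Vert w\Vert$ on $\mathcal{N}(B)^\perp$. One cosmetic slip: your bilinear expansion writes the cross term as $2\langle\tilde Q_n z_{n,1},z_{n,2}\rangle$, which presupposes symmetry of $\tilde Q_n$, whereas the lemma allows arbitrary $\tilde Q\in\mathcal{Q}$; simply keep the two cross terms $\langle\tilde Q_n z_{n,1},z_{n,2}\rangle+\langle\tilde Q_n z_{n,2},z_{n,1}\rangle$ separate---both still vanish in the limit.
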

\begin{proof}
Since $\mathcal{N}(B)$ is a closed subspace of $Z$, we can use the decomposition
\begin{equation*}
  Z = \mathcal{N}(B)\oplus \mathcal{N}(B)^\perp.
\end{equation*}
Therefore, we may split each $z\in Z$ uniquely into $z=u+v$, where $u\in\mathcal{N}(B)$ and $v\in\mathcal{N}(B)^\perp$. For $u\in\mathcal{N}(B)$ we have
\begin{equation*}
  \left\langle \Big(Q+\frac{1}{\rho}B^\ast \RIF B\Big)u,u\right\rangle_{Z^\ast,Z}\geq \alpha\Vert u\Vert_Z^2
\end{equation*}
by~\eqref{CoerciveAufKern}. Since $B$ is Fredholm, there exists $\tau >0$ such that
\begin{equation*}
  \Vert Bz\Vert_Y\geq \tau\Vert z\Vert_Z\quad\text{for all
  }z\in\mathcal{N}(B)^\perp,
\end{equation*}
see, e.g.,~\cite[page 181, (17)]{Wloka:1987}.
Now, for $v\in\mathcal{N}(B)^\perp$, we get
\begin{equation*}
	\begin{aligned}
  		\left\langle \big(Q+\rho^{-1}B^\ast \RIF B\big)v,v\right\rangle_{Z^\ast,Z} &= \langle Qv,v\rangle_{Z^\ast,Z} + \frac{1}{\rho}\left\langle \RIF Bv,Bv\right\rangle_{Y^\ast,Y}\\
  		&=\langle Qv,v\rangle_{Z^\ast,Z} + \frac{\Vert Bv\Vert_Y^2}{\rho} \\
  		&\geq -\Vert Q\Vert_{\mathfrak{L}(Z,Z^\ast)}\Vert v\Vert_Z^2+\frac{\tau^2}{\rho}\Vert v\Vert_Z^2\\
  		&= \left(\frac{\tau^2}{\rho}-\Vert Q\Vert_{\mathfrak{L}(Z,Z^\ast)}\right)\Vert v\Vert_Z^2.
	\end{aligned}
\end{equation*}
Therefore, for arbitrary $z\in Z$, we have
\begin{equation*}
	\begin{aligned}
		\big\langle \big(Q+\rho^{-1}B^\ast \RIF &B\big)z,z\big\rangle_{Z^\ast,Z} \\
		&= \left\langle \big(Q+\rho^{-1}B^\ast \RIF B\big)(u+v),(u+v)\right\rangle_{Z^\ast,Z} \\
		&= \langle Qu,u\rangle_{Z^\ast,Z} + \left\langle \big(Q+\rho^{-1}B^\ast \RIF B\big)v,v\right\rangle_{Z^\ast,Z} +2\langle Qv,u\rangle_{Z^\ast,Z} \\
		&\geq \alpha\Vert u\Vert_Z^2 + \left(\frac{\tau^2}{\rho}-\Vert Q\Vert_{\mathfrak{L}(Z,Z^\ast)}\right)\Vert v\Vert_Z^2 - 2\Vert Q\Vert_{\mathfrak{L}(Z,Z^\ast)}\Vert u\Vert_Z\Vert v\Vert_Z. 
	\end{aligned}
\end{equation*}
\RepA{The remainig steps are now analogous to the ones performed in the proof of~\cite[Lemma 3]{hager1999stabilized}.}
\end{proof}
For~\eqref{HilfsProblem15}, we obtain the following existence result:
\begin{lemma}\label{Lemma1}
Assume~\ref{A1}-\ref{A5} and set $p_\ast = (x_\ast,\lambda_\ast,\lambda_\ast)$.

Then, for any choice of the constant $\sigma_0$ sufficiently large and for any $\sigma_1>0$, there exist positive constants $\beta$ and $\delta$ such that $\sigma_0\delta\leq\sigma_1$. Furthermore, for each $p=(\underline{z},\underline{\lambda}_1,\underline{\lambda}_2)\in\mathcal{B}_\delta(p_\ast)$ and every $\rho$ satisfying
	\begin{equation}\label{VoraussetzungLemma01}
	\sigma_0\Vert \underline{z}-z_\ast\Vert_Z\leq\rho\leq\sigma_1, 
	\end{equation}
the auxiliary problem~\eqref{HilfsProblem15} has a unique solution $(z,\lambda)=(z(p),\lambda(p))$.

Moreover, it holds ${(z(p),\lambda(p))\in\mathcal{N}(\rho)}$, where
\begin{equation*}
  \mathcal{N}(\rho)=\left\{(z,\lambda)\colon\Vert
    z-z_\ast\Vert_Z+\rho\Vert\lambda-\lambda_\ast\Vert_{Y^\ast}\leq
    \rho\right\}
\end{equation*}
and for every $p_1,p_2\in\mathcal{B}_\delta(p_\ast)$ and each $\rho$ satisfying~\eqref{VoraussetzungLemma01} for both $p_1$ and $p_2$, we have
\begin{equation}\label{AussageLemma01}
  \Vert z_1-z_2\Vert_Z+\rho\Vert\lambda_1-\lambda_2\Vert_{Y^\ast}\leq\beta\Vert T(z_1,\lambda_1,p_1)-T(z_1,\lambda_1,p_2)\Vert_{Z^\ast\times Y}, 
\end{equation}
where $z_1\defas z(p_1)$, $z_2=z(p_2)$, $\lambda_1=\lambda(p_1)$ and $\lambda_2=\lambda(p_2)$.
\end{lemma}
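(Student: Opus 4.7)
The plan is to invoke~\Cref{Lemma2} for the variational inclusion~\eqref{HilfsProblem15}. Working on $Z\times Y^\ast$ endowed with the $\rho$-weighted norm $\|(z,\lambda)\|_\rho \defas \|z\|_Z+\rho\|\lambda\|_{Y^\ast}$, we fix $\rho\in(0,\sigma_1]$ and set $w_\ast \defas (z_\ast,\lambda_\ast)$, $\widetilde{W}\defas Z\times K^\circ$, $W\defas\mathcal{N}(\rho)\cap\widetilde{W}$ (so $\tau=\rho$), $F(z,\lambda)\defas\{0\}\times(K\cap\{\lambda\}^\text{An})$, and take the parameter set to consist of those $p\in\mathcal{B}_\delta(p_\ast)$ that additionally satisfy $\sigma_0\|\underline{z}-z_\ast\|_Z\leq\rho$. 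As the linearization of $T(\cdot,\cdot,p_\ast)$ at $w_\ast$ we choose
\begin{equation*}
  L(z,\lambda)\defas\begin{pmatrix} Qz+B^\ast\lambda \\ Bz-\rho\RF\lambda \end{pmatrix},\quad Q\defas\mathcal{L}^{\prime\prime}_{zz}(z_\ast,\lambda_\ast),\ B\defas G^\prime(z_\ast).
\end{equation*}
The required relation $T(w_\ast,p_\ast)\in F(w_\ast)$ then follows from $G(z_\ast)=0$ (by~\ref{A4}) together with the KKT conditions for $(z_\ast,\lambda_\ast)$.

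\textbf{Conditions~\ref{(P1)} and~\ref{(P2)}.} Since $T(w_\ast,p_\ast)=0$, the Lipschitz estimates supplied by~\ref{A3} yield
\begin{equation*}
  \|T(w_\ast,p)-T(w_\ast,p_\ast)\|_{Z^\ast\times Y}\leq c\big(\|\underline{z}-z_\ast\|_Z+\rho\|\underline{\lambda}_2-\lambda_\ast\|_{Y^\ast}\big)\leq c\rho\big(\sigma_0^{-1}+\delta\big),
\end{equation*}
and~\ref{(P1)} holds with $\eta\defas c\rho(\sigma_0^{-1}+\delta)$. For~\ref{(P2)} the $\rho\RF\lambda$-terms cancel exactly between $T$ and $L$, and the remaining residual involves only differences of $\mathcal{L}^{\prime\prime}_{zz}$ and $G^\prime$ at $(\underline{z},\underline{\lambda}_1)$ versus $(z_\ast,\lambda_\ast)$. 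By~\ref{A3} these are controlled proportionally to $\|\underline{z}-z_\ast\|_Z+\|\underline{\lambda}_1-\lambda_\ast\|_{Y^\ast}\leq\rho/\sigma_0+\delta$, and hence by any prescribed $\varepsilon>0$ once $\sigma_0$ is large and $\delta$ is small.

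\textbf{Condition~\ref{(P3)}.} This is the principal obstacle. For each admissible $\psi=(\psi_1,\psi_2)$ we must solve
\begin{equation*}
  Qz+B^\ast\lambda=-\psi_1,\qquad Bz-\rho\RF\lambda+\psi_2\in K\cap\{\lambda\}^\text{An},\qquad\lambda\in K^\circ,
\end{equation*}
which is exactly the KKT system of the linear-quadratic saddle-point problem
\begin{equation*}
  \min_{z\in Z}\max_{\lambda\in K^\circ}\ \tfrac{1}{2}\langle Qz,z\rangle_{Z^\ast,Z}+\langle\psi_1,z\rangle_{Z^\ast,Z}+\langle\lambda,Bz+\psi_2\rangle_{Y^\ast,Y}-\tfrac{\rho}{2}\|\lambda\|_{Y^\ast}^2.
\end{equation*}
Taking $\sigma_1$ small enough,~\Cref{Lem:CoerciveRetten} (with $\tilde Q=Q$, $\tilde B=B$) gives the uniform coercivity $\langle(Q+\rho^{-1}B^\ast\RIF B)z,z\rangle_{Z^\ast,Z}\geq(\alpha/2)\|z\|_Z^2$ on all of $Z$, which guarantees strict convexity and coercivity of the reduced primal objective and hence a unique $z(\psi)$. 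The multiplier $\lambda(\psi)\in K^\circ$ is then uniquely recovered from the second inclusion; here~\Cref{Lem:MatrixFredholm} and assumption~\ref{A5} enter to ensure that the range condition $-Qz-\psi_1\in\mathcal{R}(B^\ast)$ is consistent and that the recovered multiplier is admissible. Standard saddle-point stability then delivers
\begin{equation*}
  \|z(\psi_2)-z(\psi_1)\|_Z+\rho\|\lambda(\psi_2)-\lambda(\psi_1)\|_{Y^\ast}\leq\gamma\|\psi_2-\psi_1\|_{Z^\ast\times Y}
\end{equation*}
with $\gamma$ of order $\alpha^{-1}$ uniformly in $\rho\in(0,\sigma_1]$.

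\textbf{Closing the argument.} With the $\eta$ and $\gamma$ above, the radius condition $\tau=\rho\geq\eta\gamma(1-\varepsilon\gamma)^{-1}$ of~\Cref{Lemma2} reduces to $1\geq c\gamma(\sigma_0^{-1}+\delta)(1-\varepsilon\gamma)^{-1}$, which is satisfied upon first fixing $\sigma_0$ large enough, then $\varepsilon$ small, and finally $\delta$ (and $\sigma_1$) small enough so that in addition $\sigma_0\delta\leq\sigma_1$ and $\varepsilon\gamma<1$. All hypotheses of~\Cref{Lemma2} then hold, and its conclusion furnishes the unique solution $(z(p),\lambda(p))\in W=\mathcal{N}(\rho)\cap(Z\times K^\circ)$ together with estimate~\eqref{AussageLemma01} upon setting $\beta\defas\gamma(1-\varepsilon\gamma)^{-1}$. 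The genuine difficulty is the verification of~\ref{(P3)}, where the polar-cone and complementarity constraints are absorbed into the strict convexity of the reduced saddle-point system through assumption~\ref{A5} and the combined use of~\Cref{Lem:MatrixFredholm} and~\Cref{Lem:CoerciveRetten}.
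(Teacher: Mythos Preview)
Your overall architecture is the same as the paper's: invoke \Cref{Lemma2} with $w_\ast=(z_\ast,\lambda_\ast)$, $\widetilde W=Z\times K^\circ$, $\tau=\rho$, $F(z,\lambda)=\{0\}\times(K\cap\{\lambda\}^{\text{An}})$, and $L$ as you wrote. Your treatment of~\ref{(P1)} and~\ref{(P2)} is coarser than the paper's but correct in spirit; the paper actually obtains the sharper bound $\|T(w_\ast,p)-T(w_\ast,p_\ast)\|\le c(\|\underline z-z_\ast\|^2+\|\underline\lambda_1-\lambda_\ast\|\,\|\underline z-z_\ast\|+\rho\|\underline\lambda_2-\lambda_\ast\|)$, but your estimate suffices for the radius condition.

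The genuine gap is in~\ref{(P3)}. Two issues:

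\emph{(i) Coercivity of the reduced primal problem.} After maximizing over $\lambda\in K^\circ$ your saddle functional does \emph{not} reduce to a quadratic with Hessian $Q+\rho^{-1}B^\ast\RIF B$; the cone constraint turns the inner max into a Moreau envelope, and the resulting reduced objective is only $\langle Qz,z\rangle$ plus a convex (not strongly convex) term in $Bz$. Hence \Cref{Lem:CoerciveRetten} does not apply directly to give strong convexity in $z$. The paper circumvents this by studying an \emph{augmented} auxiliary problem, adding the extra quadratic term $\tfrac{1}{2\rho}\|Bz+r\|_Y^2$ to the saddle functional; this forces the $z$-part to have Hessian $Q+\rho^{-1}B^\ast\RIF B$ regardless of the cone, so \Cref{Lem:CoerciveRetten} applies. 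One then uses~\ref{A5} (namely $K\subseteq\mathcal N(B^\ast\RIF)$) to verify that the KKT systems of the augmented and original problems coincide. Your sentence about a ``range condition $-Qz-\psi_1\in\mathcal R(B^\ast)$'' is not the correct mechanism; no such compatibility condition arises, because $\lambda$ is part of the unknown.

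\emph{(ii) $\rho$-uniform stability.} ``Standard saddle-point stability'' does not yield a constant $\gamma$ independent of $\rho$ on $(0,\sigma_1]$; the usual Brezzi-type estimates degenerate as $\rho\to0$. The paper obtains the uniform bound by first treating the \emph{unconstrained} linear system $Qz+B^\ast\lambda=-\varphi$, $Bz-\rho\RF\lambda=-r$: it uses \Cref{Lem:MatrixFredholm} to get a lower bound for the block operator with $\rho=0$ on $Z\times\mathcal N(B^\ast)^\perp$, then splits $\lambda=y+y^\perp$ with $y\in\mathcal N(B^\ast)$ and shows $\|z\|_Z+\rho\|\lambda\|_{Y^\ast}\le c\|\mathcal A(z,\lambda)\|$ with $c$ independent of small $\rho$ (and continuity of the inverse handles $\rho$ bounded away from $0$). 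Only afterwards is the constrained solution compared to the unconstrained one: one shows $\bar z=\check z$ (again via~\ref{A5}), and then bounds $\rho\|\bar\lambda_1-\bar\lambda_2\|$ by combining the unconstrained estimate with a direct computation exploiting $f(\psi)\in K\cap\{\bar\lambda\}^{\text{An}}$. This chain of arguments is the substance of~\ref{(P3)} and cannot be replaced by an appeal to generic stability. Also note that the statement asserts the result for \emph{any} $\sigma_1>0$, so you cannot simply ``take $\sigma_1$ small enough''; the paper covers $\rho\in[\kappa,\sigma_1]$ separately by continuity of the solution map.
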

\begin{proof}
We want to apply \Cref{Lemma2} to our mapping
\begin{equation*} T(z,\lambda,\underline{z},\underline{\lambda}_1,\underline{\lambda}_2)=\begin{pmatrix}
    \mathcal{L}_z^{\prime}(\underline{z},\lambda)+\mathcal{L}_{zz}^{\prime\prime}(\underline{z},\underline{\lambda}_1)(z-\underline{z}) \\
    G(\underline{z})+G^\prime(\underline{z})(z-\underline{z})-\rho\RF(\lambda-\underline{\lambda}_2)
  \end{pmatrix}.
\end{equation*}
For this purpose, we denote $w$ and $x$ as pairs $(z,\lambda)$, and
associate $p$ to
$(\underline{z},\underline{\lambda}_1,\underline{\lambda}_2)$. For
$F$, in~\cref{Lemma2}, we choose
\begin{equation*}
  F(w)=F(z,\lambda)=\begin{pmatrix}
    0\\
    K\cap\{\lambda\}^\text{An}
  \end{pmatrix}
\end{equation*}
and set $\widetilde{W}=Z\times K^\circ$. Furthermore, we define
\begin{equation*}
  L\begin{pmatrix}
    z\\
    \lambda
  \end{pmatrix}\defas
  \begin{pmatrix}
    \mathcal{L}_{zz}^{\prime\prime}(z_\ast,\lambda_\ast)z+G^{\prime}(z_\ast)^{\ast}\lambda \\
    G^{\prime}(z_\ast)z-\rho\RF\lambda
  \end{pmatrix}
\end{equation*}
and we will choose $P$ later as a suitable neighborhood of
$(z_\ast,\lambda_\ast,\lambda_\ast)$. We now have to check
for~\ref{(P1)}-\ref{(P3)} of \Cref{Lemma2}.
\subsection*{Part 1: Auxiliary problems}
Based on the formulation of~\ref{(P3)}, we consider the following problem: Given $\psi\in X^\ast\times Y$, find $x$ in $\widetilde{W}$ such
that $Lx+\psi\in F(x)$. With the notation $\psi=(\varphi,r)$ our problem becomes: Find $(z,\lambda)\in\widetilde{W}$ such that
\begin{align}
  &\mathcal{L}_{zz}^{\prime\prime}(z_\ast,\lambda_\ast)z+G^{\prime}(z_\ast)^{\ast}\lambda+\varphi = 0, \label{SucheGutenNamen}\\ 
  &G^{\prime}(z_\ast)z-\rho\RF\lambda +r\in K\cap\{\lambda\}^\text{An}. \label{SucheAuchGutenNamen}
\end{align}	
\RepA{
Let $S\defas\operatorname{span}\{y_1,\ldots,y_m\}$, with $y_1,\ldots,y_m\in Y$ as in~\ref{A2}. Denoting by $\pi_S$ and $\pi_\perp$ the orthogonal projections onto $S$ and $S^\perp$, we set
\begin{equation*}
    Q\defas\mathcal{L}_{zz}^{\prime\prime}(z_\ast,\lambda_\ast),\quad B\defas \pi_\perp G^\prime(x_\ast)^\ast, \quad A\defas\pi_S G^\prime(x_\ast)^\ast, \quad s\defas\pi_\perp r, \quad t\defas\pi_S r,
\end{equation*}
as well as
\begin{equation*}
    \mu\defas \RIF\pi_S\RF\lambda\quad\text{and}\quad \nu\defas\RIF\pi_\perp\RF\lambda.
\end{equation*}
Then,~\eqref{SucheGutenNamen} and~\eqref{SucheAuchGutenNamen} are equivalent to
\begin{align}
    &Qx + B^\ast\nu + A^\ast\mu + \varphi = 0,\label{eq:SystemFirst} \\
    &Ax + t - \rho\RF\mu \in K\cap\{\mu\}^\text{An},\label{eq:SystemSecond} \\
    &Bx + s - \rho\RF\nu = 0.\label{eq:SystemThird}
\end{align}
We will now show that~\eqref{eq:SystemFirst}-\eqref{eq:SystemThird} admit a unique solution. For this purpose, let
\begin{align*}
    h(x,\mu)&\defas \frac{1}{2}\left\langle Qz,z\right\rangle_{Z^\ast,Z}+\left\langle\varphi,z\right\rangle_{Z^\ast,Z}+\left\langle\mu,Ax+t\right\rangle_{Y^\ast,Y}-\frac{\rho}{2}\Vert\mu\Vert^2_{Y^\ast}+\frac{1}{2\rho}\Vert Bz+s\Vert_Y^2 \\
    &= \frac{1}{2}\left\langle \Big(Q+\frac{B^\ast\RIF B}{\rho}\Big)x,x\right\rangle_{X^\ast,X} + \left\langle\varphi,z\right\rangle_{Z^\ast,Z} + \frac{1}{\rho}(Bx,s)_Y \\
    &\quad\quad+ \frac{1}{2\rho}\Vert s\Vert_Y^2 + \left\langle\mu,Ax+t\right\rangle_{Y^\ast,Y} - \frac{\rho}{2}\Vert\mu\Vert_{Y^\ast}^2
\end{align*}
and consider the auxiliary problem
\begin{equation}\label{AuxiliaryProblem}
  \min_{x}\max_{\mu\in K^\circ} h(z,\mu).
\end{equation}
Since $G^\ast(x_\ast)\in\mathfrak{F}(X,Y)$ and $\pi_\perp\in\mathfrak{F}(Y,Y)$, we have $B\in\mathfrak{F}(X,Y)$ as well. Thus, by~\ref{A5} and~\Cref{Lem:CoerciveRetten}, there exists $\alpha>0$ with
\begin{equation*}
\left\langle \Big(Q+\frac{B^\ast \RIF B}{2\rho}\Big)x,x\right\rangle_{X^\ast,X} \ge \alpha \Vert x\Vert_X^2\quad\text{for all }x\in X.   
\end{equation*}
Therefore, for all $\mu\in Y$ and $x\in X$, the functions $h(\cdot,\mu)$ and $h(x,\cdot)$ are strongly convex and strongly concave, respectively. Hence, for each $(\varphi,r)\in X^\ast\times Y$,~\eqref{AuxiliaryProblem} has a unique solution $(\bar{z},\bar{\mu})\defas(z(\psi),\mu(\psi))$, characterized by~\cite[Proposition~VI.1.5 and~VI.1.6]{EkelandTemam:1999}
\begin{align*}
    &Q\bar{x}+\varphi + \frac{1}{\rho}B^\ast\RIF(B\bar{x}+s) + A^\ast\bar{\mu} = 0, \\
    &A\bar{x}+t-\rho\RF\bar{\mu}\in K\cap\{\bar{\mu}\}^\text{An}.
\end{align*}
Combining this with 
\begin{equation*}
    B\bar{x}+s-\rho\RF\bar{\nu} = 0,
\end{equation*}
we see that~\eqref{eq:SystemFirst}-\eqref{eq:SystemThird} admit a unique solution $(\bar{x},\bar{\lambda})\defas(x(\psi),\lambda(\psi))$, which, by the equivalences established above, is the unique solution of~\eqref{SucheGutenNamen} and~\eqref{SucheAuchGutenNamen}.
}

\subsection*{Part 2: Stability of solutions}
To estimate the stability of $(\bar{z},\bar{\lambda})$ w.r.t. $\psi$, we first analyze the stability of the unconstrained auxiliary problem
\begin{equation*}
\min_{z\in X}\max_{\lambda\in Y^\ast}\quad g(z,\lambda), 
\end{equation*}
\RepA{where
\begin{equation*}
    g(z,\lambda) \defas\left\langle Qz,z\right\rangle_{Z^\ast,Z}+2\left\langle\varphi,z\right\rangle_{Z^\ast,Z}+\left\langle\lambda,G^\prime(x_\ast)z+r\right\rangle_{Y^\ast,Y}-\frac{\rho}{2}\Vert\lambda\Vert^2_{Y^\ast}+\frac{1}{2\rho}\Vert G^\prime(x_\ast)z+r\Vert_Y^2.
\end{equation*}
Again, by~\ref{A5} and~\Cref{Lem:CoerciveRetten}, the term
\begin{equation*}
\left\langle Qz,z\right\rangle_{Z^\ast,Z} + \frac{1}{2\rho}\Vert G^\prime(x_\ast)z+r\Vert_Y^2    
\end{equation*}
is strongly convex in $x$.}
Thus, by strong convexity/concavity, there exists a unique solution $\big(\check{z}(\psi),\check{\lambda}(\psi)\big)$ of the unconstrained problem, which solves
\RepA{
\begin{align*}
    0&=2Q\check{x} + 2\varphi + G^\prime(x_\ast)^\ast\check{\lambda} + \frac{1}{\rho}G^\prime(x_\ast)\RIF\big( G^\prime(x_\ast)\check{x}+r\big), \\
    0&=G^\prime(x_\ast)\check{x} + r - \rho\RF\check{\lambda},
\end{align*}
i.e.,
}
\begin{align}
Q\check{z}+\varphi+G^\prime(x_\ast)^\ast\check{\lambda} &= 0,\label{UnconstrainedZ}\\
G^\prime(x_\ast)\check{z}+r-\rho\RF\check{\lambda} &=0. \label{UnconstrainedLambda}
\end{align}
Thus, the operator $\mathcal{A}\in\mathfrak{L}\big( Z\times Y^\ast, Z^\ast\times Y\big)$, defined as
\begin{equation*}
\mathcal{A}(z,\lambda)\defas-\begin{pmatrix}
Q & G^\prime(x_\ast)^\ast\\
G^\prime(x_\ast) & -\rho\RF
\end{pmatrix}\begin{pmatrix}
z\\
\lambda
\end{pmatrix},
\end{equation*}
is bijective. In particular, it has a bounded inverse $\mathcal{A}^{-1}\in\mathfrak{L}\big(X^\ast\times Y,X\times Y^\ast)$, which satisfies
\begin{equation*}
    \big(\check{z}(\psi),\check{\lambda}(\psi)\big) = \mathcal{A}^{-1}(\psi).
\end{equation*}
As a consequence, there exists ${\tilde{c}(\rho)>0}$ such that
\begin{equation}\label{Beweis:CRhoTilde}
\Vert \check{z}_1-\check{z}_2\Vert_Z+\Vert\check{\lambda}_1-\check{\lambda}_2\Vert_{Y^\ast}\leq \tilde{c}(\rho)\Vert\psi_1-\psi_2\Vert_{Z^\ast\times Y} 
\end{equation}
for all $\psi_1,\psi_2\in Z^\ast\times Y$, where $\check{z}_i$ and $\check{\lambda}_i$ denote $\check{z}(\psi_i)$ and $\check{\lambda}(\psi_i)$ respectively. Slightly reformulating~\eqref{Beweis:CRhoTilde}, we conclude that there exists $c(\rho)>0$ such that, for all $\psi_1,\psi_2\in Z^\ast\times Y$, we have
\begin{equation}\label{Beweis:CRhoOhneTilde}
\Vert \check{z}_1-\check{z}_2\Vert_Z+\rho\Vert\check{\lambda}_1-\check{\lambda}_2\Vert_{Y^\ast}\leq c(\rho)\Vert\psi_1-\psi_2\Vert_{Z^\ast\times Y}. 
\end{equation}  
We now have to determine how $c(\rho)$ depends on $\rho$. Since
$\rho\mapsto c(\rho)$ is continuous, see, e.g.,~\cite[Proposition~4.3.1]{Boffi2013} on $(0,\infty)$, we only have to consider the extreme cases of $\rho$ being either very small or very large.
Given any fixed $\sigma_1>0$, we assume $\rho\le\sigma_1$. Now, for any $\kappa>0$, there exists $c_0\defas\max_{[\kappa,\sigma_1]} c(\rho)$, which gives~\eqref{Beweis:CRhoOhneTilde} with $c_0$ instead of $c(\rho)$ for all $\rho\in[\kappa,\sigma_1]$. \\
To analyze how $c(\rho)$ behaves for small $\rho$, let us define the perturbed solution operator
\begin{equation*}
\mathcal{P}\defas\begin{pmatrix}
Q & G^\prime(x_\ast)^\ast\\
G^\prime(x_\ast) & 0
\end{pmatrix}.
\end{equation*}
By \Cref{Lem:MatrixFredholm}, $\mathcal{P}\colon Z\times Y^\ast\to Z^\ast\times Y$ is Fredholm. Therefore, there exists $\tau>0$ such that
\begin{equation*}
\Vert\mathcal{P}(z,\lambda)\Vert_{Z^\ast\times Y} = \Vert Qz+G^\prime(x_\ast)^\ast\lambda\Vert_{Z^\ast}+\Vert G^\prime(x_\ast)z\Vert_Y\geq \tau \Vert (z,\lambda)\Vert_{Z\times Y^\ast}
\end{equation*}
for all $(z,\lambda)\in Z\times\mathcal{N}(B^\ast)^\perp$. This yields
\begin{equation} \label{LoesungsopAufKern}
  \begin{aligned}
    \Vert\mathcal{A}(z,\lambda)\Vert_{Z^\ast\times Y} &= \Vert Qz+G^\prime(x_\ast)^\ast\lambda\Vert_{Z^\ast}+\Vert G^\prime(x_\ast)z-\rho\RF\lambda\Vert_Y\\
    &\geq \Vert Qz+G^\prime(x_\ast)^\ast\lambda\Vert_{Z^\ast}+\Vert G^\prime(x_\ast)z\Vert_Y-\rho\Vert\RF\lambda\Vert_Y\\
    &\geq \tau\big( \Vert z\Vert_Z+\Vert\lambda\Vert_{Y^\ast}\big)-\rho\Vert \lambda\Vert_{Y^\ast}\\
    &\geq \frac{\tau}{2}\big(\Vert z\Vert_Z+\Vert\lambda\Vert_{Y^\ast}\big)
  \end{aligned}
\end{equation}
for all $(z,\lambda)\in Z\times\mathcal{N}(G^\prime(x_\ast)^\ast)^\perp$ and all $\rho>0$ small enough. For arbitrary $\lambda\in Y^\ast$, we use the unique decomposition $\lambda=y+y^\perp$ with $y\in\mathcal{N}(G^\prime(x_\ast)^\ast)$ and $y^\perp\in\mathcal{N}(G^\prime(x_\ast)^\ast)^\perp$. Thus, we get
\begin{equation}\label{LoesungsopZwischenschritt}
  \begin{aligned}
    \Vert\mathcal{A}(z,\lambda)\Vert_{Z^\ast\times Y} &= \Vert Qz+G^\prime(x_\ast)^\ast(y+y^\perp)\Vert_{Z^\ast}+\Vert G^\prime(x_\ast)z-\rho\RF y-\rho\RF y^\perp\Vert_Y\\
    &= \Vert Qz+G^\prime(x_\ast)^\ast y^\perp\Vert_{Z^\ast}+\Vert G^\prime(x_\ast)z-\rho\RF y-\rho\RF
    y^\perp\Vert_Y.
  \end{aligned}
\end{equation}
Observe that
\begin{equation*}
	\begin{aligned}
		\left( G^\prime(x_\ast)z-\rho\R y^\perp, \rho\R y\right)_Y &= \rho\left\langle y,G^\prime(x_\ast)z\right\rangle_{Y^\ast,Y}-\rho^2\left(\R y^\perp,\R y\right)_Y \\
		&= \rho\left\langle G^\prime(x_\ast)^\ast y,z\right\rangle_{Z^\ast,Z} - \rho^2\left( y^\perp,y\right)_{Y^\ast} = 0.
	\end{aligned}
\end{equation*}
Hence, it holds
\begin{equation*}
	\begin{aligned}
		\Vert G^\prime(x_\ast)z-\rho\RF y^\perp-\rho\RF y\Vert_Y^2 &= \Vert G^\prime(x_\ast)z-\rho\RF y^\perp\Vert_Y^2+\Vert \rho\RF y\Vert_Y^2  \\
		&\ge \left(\frac{\Vert G^\prime(x_\ast)z-\rho\RF y^\perp\Vert_Y+\Vert\rho\RF y\Vert_Y}{2}\right)^2,
	\end{aligned}
\end{equation*}
where we used Young's inequality in the second line.
Substituting
\begin{equation*}
\Vert G^\prime(x_\ast)z-\rho\RF y^\perp-\rho\RF y\Vert_Y\ge\frac{1}{2}\big(\Vert G^\prime(x_\ast)z-\rho\RF y^\perp\Vert_Y+\Vert\rho y\Vert_{Y^\ast}\big)
\end{equation*}
into~\eqref{LoesungsopZwischenschritt}, we obtain
\begin{equation*}
	\begin{aligned}
		\Vert\mathcal{A}(z,\lambda)\Vert_{Z^\ast\times Y} &\geq\frac{1}{2}\big( \Vert Qz+G^\prime(x_\ast)^\ast y^\perp\Vert_{Z^\ast}+\Vert G^\prime(x_\ast)z-\rho\RF y^\perp\Vert_Y+\rho\Vert y\Vert_{Y^\ast}\big)\\
		&\geq \frac{1}{2}\left(\frac{\tau}{2}\big(\Vert z\Vert_Z+\Vert y^\perp\Vert_{Y^\ast}\big)+\rho\Vert y\Vert_{Y^\ast}\right)\\
		&\geq \frac{1}{2}\left(\frac{\tau}{2}\Vert z\Vert_Z+\rho\Vert y^\perp\Vert_{Y^\ast}+\rho\Vert y\Vert_{Y^\ast}\right)\\
		&\geq \frac{1}{2}\left(\frac{\tau}{2}\Vert z\Vert_Z+\rho\Vert y^\perp+ y\Vert_{Y^\ast}\right)\\
		&=\frac{1}{2}\left(\frac{\tau}{2}\Vert z\Vert_Z+\rho\Vert\lambda\Vert_{Y^\ast}\right)
	\end{aligned}
\end{equation*}
for all $(z,\lambda)\in Z\times Y^\ast$ and all $\rho>0$ small enough. Consequently, there exists $\kappa>0$ such that, for all $\rho\in(0,\kappa]$, we can find $c_1>0$ independent of $\rho$, satisfying 
\begin{equation*}
  \Vert z\Vert_Z+\rho\Vert\lambda\Vert_{Y^\ast}\leq
  c_1\Vert\mathcal{A}(z,\lambda)\Vert_{Z^\ast\times Y}\quad\text{for
    all }(z,\lambda)\in Z\times Y.
\end{equation*}
This yields
\begin{equation*}
	\begin{aligned}
  		\Vert \check{z}_1-\check{z}_2\Vert_Z+\rho\Vert\check{\lambda}_1-\check{\lambda}_2\Vert_{Y^\ast} &\leq c_1\Vert\mathcal{A}(\check{z}_2-\check{z}_1,\check{\lambda}_2-\check{\lambda}_1\big)\Vert_{Z^\ast\times Y}\\
  		&=c_1\Vert\psi_1-\psi_2\Vert_{Z^\ast\times Y}
  	\end{aligned}
\end{equation*}
for all $\psi_1,\psi_2\in Z^\ast\times Y$.\\
Collecting our results, we conclude that, for all
$\psi_1,\psi_2\in Z^\ast\times Y$,~\eqref{UnconstrainedZ} and~\eqref{UnconstrainedLambda}  admit a unique solution $(\check{z}_i,\check{\lambda}_i)$ that fulfills
\begin{equation} \label{AbschFuerZUndLambdaUnrestricted}
  \Vert \check{z}_1-\check{z}_2\Vert_Z+\rho\Vert\check{\lambda}_1-\check{\lambda}_2\Vert_{Y^\ast}\leq \tilde{\gamma}\Vert\psi_1-\psi_2\Vert_{Z^\ast\times Y}.
\end{equation}
\RepA{
Note that, since $\big(\check{x}(\psi),\check{\lambda}(\psi)\big)$ satisfies~\eqref{UnconstrainedZ} and~\eqref{UnconstrainedLambda}, the corresponding decomposition $\big(\check{x}(\psi),\check{\mu}(\psi)+\check{\nu}(\psi)\big)$ by $\pi_S$ and $\pi_\perp$ satisfies~\eqref{eq:SystemFirst}-\eqref{eq:SystemThird}. Since both systems admit a unique solution, we conclude that $\big(\check{x}(\psi),\check{\lambda}(\psi)\big)=\big(\bar{x}(\psi),\bar{\lambda}(\psi)\big)$ and, by~\eqref{AbschFuerZUndLambdaUnrestricted},
\begin{equation}\label{AbschFuerZUndLambda}
  \Vert \bar{z}_1-\bar{z}_2\Vert_Z+\rho\Vert\bar{\lambda}_1-\bar{\lambda}_2\Vert_{Y^\ast}\leq \gamma\Vert\psi_1-\psi_2\Vert_{Z\times Y} 
\end{equation}
for all $\psi_1,\psi_2\in Z^\ast\times Y$.
}
\subsection*{Part 3: Checking~\ref{(P1)}-~\ref{(P3)}}
Now, for positive scalars $\sigma_0,\delta>0$, which will be chosen later, we define
\begin{equation} \label{DefinitionVonP} P\defas\left\{(\underline{z},\underline{\lambda}_1,\underline{\lambda}_2)\in\mathcal{B}_\delta(p_\ast)\,:\,\sigma_0\Vert\underline{z}-z_\ast\Vert_Z\leq\rho\right\}, 
\end{equation}
where $p_\ast=(z_\ast,\lambda_\ast,\lambda_\ast)$. Notice that, for $w_\ast=(z_\ast,\lambda_\ast)$, the KKT conditions yield $T(w_\ast,p_\ast)\in F(w_\ast)$. \\
To prove~\ref{(P1)}, we write $p=(\underline{z},\underline{\lambda}_1,\underline{\lambda}_2)$ and since 
\begin{equation*}
  T(w_\ast,p_\ast)=\begin{pmatrix}
    0\\
    G(z_\ast)
  \end{pmatrix},
\end{equation*}
we get
\begin{equation}\label{AusdruckFuerTMinusT}
  T(w_\ast,p)-T(w_\ast,p_\ast)=\begin{pmatrix}
    \mathcal{L}^\prime_z(\underline{z},\lambda_\ast)+\mathcal{L}^{\prime\prime}_{zz}(\underline{z},\underline{\lambda}_1)(z_\ast-\underline{z})\\
    G(\underline{z})-G(z_\ast)+G^\prime(\underline{z})(z_\ast-\underline{z})-\rho\RF(\lambda_\ast-\underline{\lambda}_2)
  \end{pmatrix}. 
\end{equation}
Using the Taylor series expansions
\begin{equation*}
	\begin{aligned}
  		&\mathcal{L}_z^\prime(\underline{z},\lambda_\ast)=\mathcal{L}^\prime_z(z_\ast,\lambda_\ast)-\mathcal{L}^{\prime\prime}_{zz}(\underline{z},\lambda_\ast)(z_\ast-\underline{z})+\mathcal{O}(\Vert z_\ast-\underline{z}\Vert_Z^2), \\
  		&G(\underline{z})=G(z_\ast)-G^\prime(\underline{z})(z_\ast-\underline{z})+\mathcal{O}(\Vert z_\ast- \underline{z}\Vert_Z^2),
  	\end{aligned}
\end{equation*}
and $\mathcal{L}^\prime_z(z_\ast,\lambda_\ast)=0$,~\eqref{AusdruckFuerTMinusT} simplifies to 
\begin{equation*}
  T(w_\ast,p)-T(w_\ast,p_\ast)=\begin{pmatrix}
    \big[\mathcal{L}^{\prime\prime}_{zz}(\underline{z},\underline{\lambda}_1)-\mathcal{L}^{\prime\prime}_{zz}(\underline{z},\lambda_\ast)\big](z_\ast-\underline{z})+\mathcal{O}(\Vert z_\ast- \underline{z}\Vert_Z^2)\\
    -\rho\RF(\lambda_\ast-\underline{\lambda}_2)+\mathcal{O}(\Vert z_\ast- \underline{z}\Vert_Z^2)
  \end{pmatrix}.
\end{equation*}
Taking norms and utilizing the Lipschitz continuity of $\mathcal{L}^{\prime\prime}_{zz}(\underline{z},\cdot)$ yields
\begin{equation}\label{NormabschFuerP1}
	\begin{aligned}
  		\Vert T(w_\ast,p)-T&(w_\ast,p_\ast)\Vert_{Z^\ast\times Y} \\
  		&\leq c\left(\Vert z_\ast-\underline{z}\Vert_Z^2+\Vert\underline{\lambda}_1-\lambda_\ast\Vert_{Y^\ast}\Vert z_\ast-\underline{z}\Vert_Z+\rho\Vert\lambda_\ast-\underline{\lambda}_2\Vert_{Y^\ast}\right) 
	\end{aligned}
\end{equation}
for all $p\in P$. By the definition of $P$ and for $\delta$ small enough, the right hand side of~\eqref{NormabschFuerP1} is bounded by $c\delta$ and we notice, that the constant $\eta$ in~\ref{(P1)} can be chosen arbitrarily small, if $\delta$ is chosen accordingly. \\
\ \\
For~\ref{(P2)} and~\ref{(P3)}, \RepA{we refer to the proofs of \textbf{(P2)} and \textbf{(P3)} provided in~\cite[Proof of Lemma 1]{hager1999stabilized}}.
\end{proof}
\begin{remark}\label{Bem:SigmaUndDeltaSchrumpfen}
  Carefully inspecting the proof of \Cref{Lemma1}, we observe that the constant $\beta$ appearing in~\eqref{AussageLemma01} can be kept fixed, when we decrease $\sigma_1$ or $\delta$. In other words: If~\eqref{AussageLemma01} holds for some $\overline{\sigma}_1,\overline{\delta}>0$, then it holds for all $0<\sigma_1\le\overline{\sigma}_1$, $0<\delta\le\overline{\delta}$ with $\beta$ unchanged.
\end{remark}

\section{Local convergence}
\label{sec:localconv}
We are now finally in the position to state and proof our main result. 
\begin{theorem}\label{Theorem1}
  Assume~\ref{A1}-\ref{A5}.
  
  Then for any $\sigma_0>0$ large enough, there exist positive constants $\sigma_1,\delta,\overline{\beta}$, such that all the following hold:
  \begin{enumerate}[label=\textnormal{(T\arabic*)}]
  \item \label{Theorem1Punkt1}We have $\sigma_0\delta\leq\sigma_1$.
  \item \label{Theorem1Punkt2} There exists $r_0>0$ such that for all $(x_0,\lambda_0)\in\mathcal{B}_{r_0}\big((x_\ast,\lambda_\ast)\big)$, the sequence $\big((x_k,\lambda_k)\big)_{k\in\mathbb{N}}$ of solutions to~\eqref{HilfsProblem15} is contained in $\mathcal{B}_\delta\big(x_\ast,\lambda_\ast)\big)$. Especially, each $x_{k+1}$ is a local minimizer to~\eqref{StabilizedProblem}, $\lambda_{k+1}$ is the unique maximizer in~\eqref{StabilizedProblem} associated to $x_{k+1}$ and $\rho_k$ in~\eqref{StabilizedProblem} is an arbitrary constant that fulfills
    \begin{equation}\label{SchrankenFuerRho}
      \sigma_0\Vert z_k-z_\ast\Vert_Z\leq\rho_k\leq\sigma_1. 
    \end{equation}
  \item \label{Theorem1Punkt3}For each $k\in\mathbb{N}$ there exists $\widehat{\lambda}_k\in\argmin\big\{\Vert\lambda_k-\lambda\Vert_{Y^\ast}\,:\,\lambda\in\Lambda(z_\ast)\big\}$ and we have
    \begin{equation}\label{AbschTheoremBeides}
    	\begin{aligned}
      \Vert z_{k+1}-z_\ast\Vert_Z+\Vert&\lambda_{k+1}-\widehat{\lambda}_{k+1}\Vert_{Y^\ast} \\
      &\leq\overline{\beta}\Big(\Vert z_k-z_\ast\Vert_Z^2+\Vert\lambda_k-\widehat{\lambda}_k\Vert_{Y^\ast}^2+\rho_k\Vert\lambda_k-\widehat{\lambda}_k\Vert_{Y^\ast}\Big). 
	\end{aligned}    
    \end{equation}
  \end{enumerate}
\end{theorem}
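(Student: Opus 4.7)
The strategy is to iterate \Cref{Lemma1}, with the core work concentrated in establishing~\ref{Theorem1Punkt3}. First,~\ref{Theorem1Punkt1} follows directly from \Cref{Lemma1}: fix any $\sigma_1>0$ and take $\sigma_0$ large enough that the $\varepsilon$-absorption performed in the proof of \Cref{Lemma1} goes through; \Cref{Lemma1} then supplies constants $\beta,\delta>0$ satisfying $\sigma_0\delta\le\sigma_1$.

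For~\ref{Theorem1Punkt2}, I would proceed by induction on $k$. Pick $r_0\le\delta$ so that $(z_0,\lambda_0)\in\mathcal{B}_{r_0}(z_\ast,\lambda_\ast)$ forces $p_0=(z_0,\lambda_0,\lambda_0)\in\mathcal{B}_\delta(p_\ast)$. Given $(z_k,\lambda_k)\in\mathcal{B}_\delta(z_\ast,\lambda_\ast)$ and any $\rho_k$ satisfying~\eqref{SchrankenFuerRho}, \Cref{Lemma1} supplies the unique solution $(z_{k+1},\lambda_{k+1})\in\mathcal{N}(\rho_k)$ of~\eqref{HilfsProblem15}. Strong concavity of $L_k(z_{k+1},\cdot)$ identifies $\lambda_{k+1}$ as the unique maximizer in~\eqref{StabilizedProblem}, while local minimality of $z_{k+1}$ in $L_k(\cdot,\lambda_{k+1})$ follows from \Cref{Lem:CoerciveRetten} applied to $\mathcal{L}^{\prime\prime}_{zz}(z_k,\lambda_k)$ in combination with~\eqref{SOSC} and Lipschitz continuity of the Hessian. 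Persistence of the iterates in $\mathcal{B}_\delta(z_\ast,\lambda_\ast)$ follows from~\ref{Theorem1Punkt3} combined with the projection estimate $\|\widehat{\lambda}_{k+1}-\lambda_\ast\|_{Y^\ast}\le\|\lambda_{k+1}-\lambda_\ast\|_{Y^\ast}$, after shrinking $r_0$ if necessary.

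The heart of the argument is~\ref{Theorem1Punkt3}. I would apply~\eqref{AussageLemma01} with
\begin{equation*}
    p_1=(z_k,\lambda_k,\lambda_k),\qquad p_2=(z_\ast,\widehat{\lambda}_k,\widehat{\lambda}_k).
\end{equation*}
Direct substitution into~\eqref{ErsteKKT}--\eqref{LetzteKKT} confirms that $(z_\ast,\widehat{\lambda}_k)$ is the unique solution of~\eqref{HilfsProblem15} for $p_2$, using $\widehat{\lambda}_k\in\Lambda(z_\ast)\subseteq K^\circ$, $G(z_\ast)=0$, and~\ref{A5}; both $p_i$ lie in the admissible set by the induction hypothesis and~\eqref{SchrankenFuerRho}. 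The resulting estimate bounds $\|z_{k+1}-z_\ast\|_Z+\rho_k\|\lambda_{k+1}-\widehat{\lambda}_k\|_{Y^\ast}$ by $\beta\|T(z_{k+1},\lambda_{k+1},p_1)-T(z_{k+1},\lambda_{k+1},p_2)\|_{Z^\ast\times Y}$. To control this difference, I would Taylor-expand $\mathcal{L}^\prime_z$ and $G$ around $z_\ast$, exploiting $\mathcal{L}^\prime_z(z_\ast,\widehat{\lambda}_k)=0$, $G(z_\ast)=0$, and the Lipschitz continuity from~\ref{A3}; the $-\rho_k\RF(\lambda-\underline{\lambda}_2)$-term contributes exactly $\rho_k\RF(\lambda_k-\widehat{\lambda}_k)$, producing the $\rho_k\|\lambda_k-\widehat{\lambda}_k\|_{Y^\ast}$ piece of~\eqref{AbschTheoremBeides}. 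Cross terms such as $\|z_k-z_\ast\|_Z\cdot\|\lambda_{k+1}-\widehat{\lambda}_k\|_{Y^\ast}$ are handled by trading $\|z_k-z_\ast\|_Z\le\rho_k/\sigma_0$ and absorbing into the LHS (large $\sigma_0$), while Young's inequality converts $\|\lambda_k-\widehat{\lambda}_k\|_{Y^\ast}\cdot\|z_k-z_\ast\|_Z$ into the quadratic contributions $\|z_k-z_\ast\|_Z^2+\|\lambda_k-\widehat{\lambda}_k\|_{Y^\ast}^2$. Since $\widehat{\lambda}_{k+1}$ is the nearest point of $\Lambda(z_\ast)$ to $\lambda_{k+1}$ and $\widehat{\lambda}_k\in\Lambda(z_\ast)$, the inequality $\|\lambda_{k+1}-\widehat{\lambda}_{k+1}\|_{Y^\ast}\le\|\lambda_{k+1}-\widehat{\lambda}_k\|_{Y^\ast}$ then replaces $\widehat{\lambda}_k$ by $\widehat{\lambda}_{k+1}$ in the estimate.

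The main obstacle is removing the $\rho_k$ weight from the $\lambda$-term so as to reach the unweighted form of~\eqref{AbschTheoremBeides}. Here I would exploit the Fredholm structure from~\ref{A4}: since $\lambda_{k+1}-\widehat{\lambda}_{k+1}$ lies in $\mathcal{N}(G^\prime(z_\ast)^\ast)^\perp$ by the projection property, Fredholmness supplies a $\tau>0$ with $\|G^\prime(z_\ast)^\ast\mu\|_{Z^\ast}\ge\tau\|\mu\|_{Y^\ast}$ on this complement, and hence $\|\lambda_{k+1}-\widehat{\lambda}_{k+1}\|_{Y^\ast}$ is controlled by $\|G^\prime(z_\ast)^\ast(\lambda_{k+1}-\widehat{\lambda}_{k+1})\|_{Z^\ast}$. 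Expressing the latter through~\eqref{ErsteKKT} together with $\mathcal{L}^\prime_z(z_\ast,\widehat{\lambda}_{k+1})=0$ and combining with the quadratic bound on $\|z_{k+1}-z_\ast\|_Z$ just derived yields the unweighted estimate~\eqref{AbschTheoremBeides}.
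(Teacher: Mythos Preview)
Your overall strategy---iterate \Cref{Lemma1} with $p$-parameters built from $(z_k,\lambda_k)$ and $(z_\ast,\widehat{\lambda}_k)$, Taylor-expand $T$, then upgrade the $\rho_k$-weighted $\lambda$-bound to an unweighted one---matches the paper's. There are, however, three places where your sketch either fails or needs substantially more justification.

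\textbf{The orthogonality claim.} Your assertion that $\lambda_{k+1}-\widehat{\lambda}_{k+1}\in\mathcal{N}(G^\prime(z_\ast)^\ast)^\perp$ ``by the projection property'' is not true for projections onto general convex sets; $\Lambda(z_\ast)=K^\circ\cap\{G^\prime(z_\ast)^\ast\lambda=-f^\prime(z_\ast)\}$ is only an affine subspace when $K=\{0\}$. The claim \emph{is} correct under~\ref{A5}, but this takes an argument: since $\RIF K\subseteq\mathcal{N}(G^\prime(z_\ast)^\ast)$, the $K^\circ$-constraints act only on the $\mathcal{N}(G^\prime(z_\ast)^\ast)$-component of $\lambda$, while the affine constraint fixes the orthogonal component; because $\lambda_{k+1}\in K^\circ$ by~\eqref{DritteKKT}, its $\mathcal{N}(G^\prime(z_\ast)^\ast)$-component already satisfies the cone constraint, and the projection leaves it untouched. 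Once this is supplied, your Fredholm argument is a clean alternative to the paper's route, which instead invokes an abstract Hoffman-type error bound (\cite[Theorem~6]{Ngai2005}) to obtain~\eqref{KegelAbstandLambda}.

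\textbf{Local minimality.} You claim $z_{k+1}$ is a local minimizer of $L_k(\cdot,\lambda_{k+1})$. This is false in general: the Hessian of $L_k(\cdot,\lambda_{k+1})$ is $\mathcal{L}^{\prime\prime}_{zz}(z_k,\lambda_k)$, which~\eqref{SOSC} makes coercive only on $\mathcal{N}(G^\prime(z_\ast))$, not on all of $Z$. What~\ref{Theorem1Punkt2} asserts is that $z_{k+1}$ locally minimizes the \emph{reduced} function $C(z)=\max_{\lambda\in K^\circ}L_k(z,\lambda)$. The paper computes $C^{\prime\prime}(z)=\mathcal{L}^{\prime\prime}_{zz}(z_k,\lambda_k)+\rho_k^{-1}G^\prime(z_k)^\ast\RIF G^\prime(z_k)$ near $z_{k+1}$, and it is \emph{this} operator that \Cref{Lem:CoerciveRetten} renders coercive.

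\textbf{Containment.} Your argument that~\ref{Theorem1Punkt3} plus $\Vert\widehat{\lambda}_{k+1}-\lambda_\ast\Vert\le\Vert\lambda_{k+1}-\lambda_\ast\Vert$ keeps $(z_{k+1},\lambda_{k+1})\in\mathcal{B}_\delta(z_\ast,\lambda_\ast)$ is circular:~\ref{Theorem1Punkt3} only controls $\Vert\lambda_{k+1}-\widehat{\lambda}_{k+1}\Vert$, and your projection inequality goes the wrong way to bound $\Vert\lambda_{k+1}-\lambda_\ast\Vert$. The paper closes this by first deriving the intermediate bound $\Vert\lambda_{k+1}-\widehat{\lambda}_k\Vert\le c(\Vert z_k-z_\ast\Vert+\Vert\lambda_k-\widehat{\lambda}_k\Vert)$ from the $\rho_k$-weighted estimate (dividing by $\rho_k$ and using $\rho_k\ge\sigma_0\Vert z_k-z_\ast\Vert$), which yields $\Vert\lambda_{k+1}-\lambda_\ast\Vert\le\Vert\lambda_k-\lambda_\ast\Vert+c(\Vert z_k-z_\ast\Vert+\Vert\lambda_k-\widehat{\lambda}_k\Vert)$; summing the geometric series in the induction then gives containment. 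A smaller point: evaluating $T$ at $(z_{k+1},\lambda_{k+1})$ rather than at $(z_\ast,\widehat{\lambda}_k)$, as the paper does, leaves you with extra $\Vert z_{k+1}-z_\ast\Vert$-terms on the right that must be absorbed; it works, but the paper's choice is cleaner.
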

Choosing $\rho_k$ proportional to the total error $\Vert z_{k}-z_\ast\Vert_Z+\Vert\lambda_{k}-\widehat{\lambda}_k\Vert_{Y^\ast}$,~\eqref{AbschTheoremBeides} gives local quadratic convergence of the iterates.
\begin{proof}
  Choosing $\sigma_1>0$ arbitrary,~\ref{Theorem1Punkt1} follows
  immediately from \Cref{Lemma1}. Furthermore, we get, from~\cref{Lemma1} that for all $p_k=(z_k,\lambda_k,\lambda_k)\in\mathcal{B}_\delta(p_\ast)$, there exists a unique solution $(z_{k+1},\lambda_{k+1})\in\mathcal{N}(\rho_k)$ to~\eqref{HilfsProblem15}, where $\sigma_0\Vert z_k-z_\ast\Vert_Z\leq\rho_k\leq\sigma_1$ is arbitrary.
  \subsection*{Part 1: Proof of~\ref{Theorem1Punkt3}}\label{Part1}
  The existence of $\widehat{\lambda}_k$ follows from basic existence results applied to the problem
  \begin{equation*}
  \begin{aligned}
    \text{min}\quad&\frac{1}{2}\Vert\lambda_k-\lambda\Vert_{Y^\ast}^2 \\
    \text{s.t.}\quad&\lambda\in\Lambda(z_\ast).
  \end{aligned}    
  \end{equation*}
  We note that, for $p_1=(z_\ast,\lambda_\ast,\widehat{\lambda}_k)$ and $p_2=(z_k,\lambda_k,\lambda_k)$,~\eqref{HilfsProblem15} has the solutions $(z_1,\lambda_1)=(z_\ast,\widehat{\lambda}_k)$ and $(z_2,\lambda_2)=(z_{k+1},\lambda_{k+1})$ respectively. Since $\lambda_\ast\in\Lambda(z_\ast)$, we have
  \begin{equation*}
    \Vert\widehat{\lambda}_k-\lambda_\ast\Vert_{Y^\ast}\leq\Vert\widehat{\lambda}_k-\lambda_k\Vert_{Y^\ast}+\Vert\lambda_k-\lambda_\ast\Vert_{Y^\ast}\leq 2\Vert\lambda_k-\lambda_\ast\Vert_{Y^\ast}.
  \end{equation*}
  Hence, by choosing $p_2=p_k\in\mathcal{B}_\delta(p_\ast)$ close enough to $p_\ast=(x_\ast,\lambda_\ast,\lambda_\ast)$, we can achieve
\begin{equation*}
          \rho_k\Vert \widehat{\lambda}_k - \lambda_\ast\Vert_{Y^\ast} \le \rho_k\quad\text{and}\quad \Vert\widehat{\lambda}_k-\lambda_\ast\Vert_{Y^\ast} < \delta,
 \end{equation*}
i.e., $(x_1,\lambda_1)\in\mathcal{N}(\rho_k)$ and $p_1\in\mathcal{B}_\delta(p_\ast)$. Now, $p_1$ satisfies~\eqref{VoraussetzungLemma01} and assuming $\rho_k$ is chosen in a way that~\eqref{VoraussetzungLemma01} also holds true for $p_k$, it follows from~\eqref{AussageLemma01} that
  \begin{equation}\label{AbschZundRhoLambdaImBeweis}
    \Vert z_{k+1}-z_\ast\Vert_Z+\rho_k\Vert\lambda_{k+1}-\widehat{\lambda}_k\Vert_{Y^\ast}\leq c E_k, 
  \end{equation}
  where
  \begin{equation}\label{DefinitionVonEk}
    E_k\defas \left\Vert \begin{pmatrix}
	\mathcal{L}^\prime_z(z_k,\widehat{\lambda}_k)+\mathcal{L}^{\prime\prime}_{zz}(z_k,\lambda_k)(z_\ast-z_k) \\
	G(z_k)-G(z_\ast)+G^\prime(z_k)(z_\ast-z_k)-\rho_k\RF(\widehat{\lambda}_k-\lambda_k)
      \end{pmatrix}\right\Vert_{Z^\ast\times Y}. 
  \end{equation}
  In the following, we will use the estimates
  \begin{align}
    E_k&\leq c\big( \Vert
    z_k-z_\ast\Vert_Z^2+\Vert\lambda_k-\widehat{\lambda}_k\Vert_{Y^\ast}\Vert
    z_k-z_\ast\Vert_Z+\rho_k\Vert\lambda_k-\widehat{\lambda}_k\Vert_{Y^\ast}\big)
    \defsa c\overline{E}_k \label{DefinitionVonEkQuer} \\
    &\leq c\big(\Vert z_k-z_\ast\Vert_Z^2+\Vert\lambda_k-\widehat{\lambda}_k\Vert_{Y^\ast}^2+\rho_k\Vert\lambda_k-\widehat{\lambda}_k\Vert_{Y^\ast}\big)\defsa c\overline{\overline{E}}_k \label{DefinitionVonEkQuerQuer}
  \end{align}
  and
  \begin{equation}\label{EkQuadratKleinerEkQuerQuer}
    E_k^2\leq c\overline{\overline{E}}_k, 
  \end{equation}
  which we shall prove later (see \hyperref[Part4]{Part 4}).\\
  Combining~\eqref{AbschZundRhoLambdaImBeweis} and~\eqref{DefinitionVonEkQuerQuer} proofs estimate~\eqref{AbschTheoremBeides} for $z_{k+1}$.\\
  Next we show that $\lambda_{k+1}$ remains close to $\lambda_\ast$, if $(z_k,\lambda_k)$ is close to $(z_\ast,\lambda_\ast)$. We will use this later to prove~\eqref{AbschTheoremBeides} for $\lambda_{k+1}$ and to show that the iterates $(z_k,\lambda_k)$ remain in proximity to $(z_\ast,\lambda_\ast)$, if our initial guess $(z_0,\lambda_0)$ is chosen accordingly. We divide~\eqref{AbschZundRhoLambdaImBeweis} by $\rho_k$ to get
  \begin{equation}\label{LambdaAbstandBeweisTheo1}
    \begin{aligned}
      \Vert \lambda_{k+1}-\widehat{\lambda}_k\Vert_{Y^\ast} &\leq c\frac{\overline{E}_k}{\rho_k} \\
      &\leq c\frac{\big(\Vert z_k-z_\ast\Vert_Z+\Vert\lambda_k-\widehat{\lambda}_k\Vert_{Y^\ast}\big)\Vert z_k-z_\ast\Vert_Z+\rho_k\Vert\lambda_k-\widehat{\lambda}_k\Vert_{Y^\ast}}{\rho_k}\\
      &\leq c\big(\Vert z_k-z_\ast\Vert_Z+\Vert\lambda_k-\widehat{\lambda}_k\Vert_{Y^\ast}\big), 
    \end{aligned}
  \end{equation}
where we have used $\rho_k\geq\sigma_0\Vert z_k-z_\ast\Vert_Z$ in the
second line. Combining this with the triangle inequality, we
conclude
\begin{equation}\label{NormLambdaK+1ZuSternTheo1Beweis}
  \begin{aligned}
    \Vert\lambda_{k+1}-\lambda_\ast\Vert_{Y^\ast} &\leq \Vert\lambda_{k+1}-\widehat{\lambda}_k\Vert_{Y^\ast} +\Vert\widehat{\lambda}_k-\lambda_k\Vert_{Y^\ast}+\Vert\lambda_k-\lambda_\ast\Vert_{Y^\ast}\\
    &\leq \Vert\lambda_k-\lambda_\ast\Vert_{Y^\ast}+c\big(\Vert z_k-z_\ast\Vert_Z+\Vert\lambda_k-\widehat{\lambda}_k\Vert_{Y^\ast}\big)\\
    &\leq \Vert\lambda_k-\lambda_\ast\Vert_{Y^\ast}+c\big(\Vert z_k-z_\ast\Vert_Z+\Vert\lambda_k-\lambda_\ast\Vert_{Y^\ast}\big). 
  \end{aligned}
\end{equation}
Next up, we want to show 
  \begin{equation}\label{AbschNormLStrich}
    \Vert\mathcal{L}^\prime_z(z_\ast,\lambda_{k+1})\Vert_{Z^\ast}\leq c\overline{\overline{E}}_k
  \end{equation}
  and
  \begin{equation*}
    \Vert \lambda_{k+1}-\widehat{\lambda}_{k+1}\Vert_{Y^\ast} \le c \Vert\mathcal{L}^\prime_z(z_\ast,\lambda_{k+1})\Vert_{Z^\ast}
  \end{equation*} 
  to finish the proof of~\eqref{AbschTheoremBeides}. Since $\mathcal{L}^\prime_z$ is Lipschitz continuous in a neighborhood of $(z_\ast,\lambda_\ast)$, together with~\eqref{AbschZundRhoLambdaImBeweis}, we conclude
  \begin{equation}\label{NormLStrichDifferenzTheorem1Beweis}
    \Vert\mathcal{L}^\prime_z(z_\ast,\lambda_{k+1})-\mathcal{L}^\prime_z(z_{k+1},\lambda_{k+1})\Vert_{Z^\ast}\leq c\Vert z_{k+1}-z_\ast\Vert_Z\leq c E_k. 
  \end{equation}
  Using~\eqref{ErsteKKT} together with a Taylor series expansion of $\mathcal{L}^\prime_z(\cdot,\lambda_{k+1})$ around $z_k$ yields
  \begin{equation}\label{NormLStrichTheo1Beweis}
    \begin{aligned}
      \Vert \mathcal{L}^\prime_z(z_{k+1},&\lambda_{k+1})\Vert_{Z^\ast} \\
      &\leq \big\Vert \mathcal{L}^\prime_z(z_k,\lambda_{k+1})+\mathcal{L}^{\prime\prime}_{zz}(z_k,\lambda_{k+1})(z_{k+1}-z_k)\big\Vert_{Z^\ast}+ c\Vert z_{k+1}-z_k\Vert_Z^2 \\
      &= \big\Vert\left( \mathcal{L}^{\prime\prime}_{zz}(z_k,\lambda_{k+1})-\mathcal{L}^{\prime\prime}_{zz}(z_k,\lambda_k)\right)(z_{k+1}-z_k)\big\Vert_{Z^\ast}+ c\Vert z_{k+1}-z_k\Vert_Z^2 \\
      &\leq  c\big(\Vert\lambda_{k+1}-\lambda_k\Vert_{Y^\ast}\Vert z_{k+1}-z_k\Vert_Z+\Vert z_{k+1}-z_k\Vert_Z^2\big)\\
      &\leq  c \big(\Vert z_{k+1}-z_k\Vert_Z^2+\Vert\lambda_{k+1}-\lambda_k\Vert_{Y^\ast}^2\big), 
    \end{aligned}
  \end{equation}
  where we used Young's inequality in the last line. We will now show that this expression is bounded by $c\overline{\overline{E}}_k$. For $\Vert\lambda_{k+1}-\lambda_k\Vert_{Y^\ast}^2$, we use~\eqref{LambdaAbstandBeweisTheo1} and get
  \begin{equation*}
  	\begin{aligned}
    \Vert\lambda_{k+1}-\lambda_k\Vert_{Y^\ast} &\leq \Vert\lambda_{k+1}-\widehat{\lambda}_k\Vert_{Y^\ast}+\Vert\widehat{\lambda}_k-\lambda_k\Vert_{Y^\ast} \\
    &\leq  c\big(\Vert z_k-z_\ast\Vert_Z+\Vert\widehat{\lambda}_k-\lambda_k\Vert_{Y^\ast}\big)+\Vert\widehat{\lambda}_k-\lambda_k\Vert_{Y^\ast} \\
    &\leq c\big(\Vert z_k-z_\ast\Vert_Z+\Vert\widehat{\lambda}_k-\lambda_k\Vert_{Y^\ast}\big).
    \end{aligned}
  \end{equation*}
  Squaring and again using Young's inequality leads to
  \begin{equation} \label{TeilEins}
    \Vert\lambda_{k+1}-\lambda_k\Vert_{Y^\ast}^2\leq c\big(\Vert z_k-z_\ast\Vert_Z^2+\Vert\widehat{\lambda}_k-\lambda_k\Vert_{Y^\ast}^2\big)\leq c\overline{\overline{E}}_k.
  \end{equation}
  On the other hand, we have
  \begin{equation*}
    \Vert z_{k+1}-z_k\Vert_Z\leq \Vert z_{k+1}-z_\ast\Vert_Z+\Vert z_\ast-z_k\Vert_Z \leq c E_k+\Vert z_\ast-z_k\Vert_Z.
  \end{equation*}
  Therefore, using~\eqref{EkQuadratKleinerEkQuerQuer}, we get
  \begin{equation*}
    \Vert z_{k+1}-z_k\Vert_Z^2\leq c\big(E_k^2+\Vert z_k-z_\ast\Vert_Z\big)^2\leq c\overline{\overline{E}}_k.
  \end{equation*}
  Combining with~\eqref{NormLStrichTheo1Beweis} and~\eqref{TeilEins} yields
  \begin{equation*}
    \Vert \mathcal{L}^\prime_z(z_{k+1},\lambda_{k+1})\Vert_{Z^\ast}\leq c\overline{\overline{E}}_k,
  \end{equation*}
  which, together with~\eqref{NormLStrichDifferenzTheorem1Beweis}, leads to
  \begin{equation*}
  	\begin{aligned}
    \Vert \mathcal{L}^\prime_z(z_\ast,\lambda_{k+1})\Vert_{Z^\ast} &\leq \Vert \mathcal{L}^\prime_z(z_{k+1},\lambda_{k+1})\Vert_{Z^\ast}+\Vert \mathcal{L}^\prime_z(z_\ast,\lambda_{k+1})-\mathcal{L}^\prime_z(z_{k+1},\lambda_{k+1})\Vert_{Z^\ast} \\
    &\leq c\overline{\overline{E}}_k,
    \end{aligned}
  \end{equation*}
  finishing the proof of~\eqref{AbschNormLStrich}. To obtain a bound on $\Vert\lambda_{k+1}-\widehat{\lambda}_{k+1}\Vert_{Y^\ast}$, consider the following problem: 
  \begin{equation*}
    \textnormal{Find $\lambda\in K^\circ$ such that } \mathcal{L}^\prime_z(z_\ast,\lambda)=0,
  \end{equation*}
  which, by our assumptions, can equivalently be written as
  \begin{equation*}
  	\begin{aligned}
    &\left\langle\lambda, y_i\right\rangle_{Y^\ast,Y}\le 0, \quad i=1,\ldots,m \\
    &G^\prime(z_\ast)^\ast\lambda = -f^\prime(z_\ast).
    \end{aligned}
  \end{equation*}
  Now, \cite[Theorem 6]{Ngai2005} yields
  \begin{equation}\label{KegelAbstandLambda}
    \Vert \lambda_{k+1}-\widehat{\lambda}_{k+1}\Vert_{Y^\ast} =\dist (\lambda_{k+1},\Lambda(z_\ast))\leq c\Vert\mathcal{L}^\prime_z(z_\ast,\lambda_{k+1})\Vert_{Z^\ast}, 
  \end{equation}
  completing the proof of~\eqref{AbschTheoremBeides}.
  \subsection*{Part 2: Containment of the sequence}\label{Part2}
  Recall, what we have shown up until now: For $p_k$ sufficiently close to $p_\ast$ and for all $\rho_k$ satisfying~\eqref{SchrankenFuerRho}, there is a unique solution $(z_{k+1},\lambda_{k+1})\in\mathcal{N}(\rho_k)$ to~\eqref{HilfsProblem15}, satisfying~\eqref{AbschTheoremBeides} and~\eqref{NormLambdaK+1ZuSternTheo1Beweis}. We now prove the proclaimed containment of the sequence $\big((z_k,\lambda_k)\big)_{k\in\mathbb{N}}$. \\
  For this purpose, let $\varepsilon<1$ and let $\overline{\beta},\beta_0$ be the constants appearing in~\eqref{AbschTheoremBeides} and~\eqref{NormLambdaK+1ZuSternTheo1Beweis} respectively. By Remark~\ref{Bem:SigmaUndDeltaSchrumpfen}, $\overline{\beta}$ can be kept fixed when decreasing $\sigma_1$ or $\delta$. Therefore, we can choose $\delta,\sigma_1$ of Lemma~\ref{Lemma1} small enough, such that
  \begin{equation*}
    \overline{\beta}\Big(\Vert z_k-z_\ast\Vert_Z^2+\Vert\lambda_k-\widehat{\lambda}_k\Vert_{Y^\ast}^2+\rho_k\Vert\lambda_k-\widehat{\lambda}_k\Vert_{Y^\ast}\Big)\leq \varepsilon\big(\Vert z_k-z_\ast\Vert_Z+\Vert\lambda_k-\widehat{\lambda}_k\Vert_{Y^\ast}\big),
  \end{equation*}
  for all $p_k\in\mathcal{B}_\delta(p_\ast)$ and $\rho_k\leq\sigma_1$.
  As we have seen in \hyperref[Part1]{Part 1}, specifically~\eqref{AbschTheoremBeides} and~\eqref{NormLambdaK+1ZuSternTheo1Beweis}, for all $p_k\in\mathcal{B}_\delta(p_\ast)$ and all $\rho_k$ with~\eqref{SchrankenFuerRho}, there exists a unique solution $(z_{k+1},\lambda_{k+1})\in\mathcal{N}(\rho_k)$ to~\eqref{HilfsProblem15}, which additionally satisfies
  \begin{equation}\label{ContainmentUngleichungEins}
    \Vert z_{k+1}-z_\ast\Vert_Z+\Vert\lambda_{k+1}-\widehat{\lambda}_{k+1}\Vert_{Y^\ast}\leq \varepsilon\big(\Vert z_k-z_\ast\Vert_Z+\Vert\lambda_k-\widehat{\lambda}_k\Vert_{Y^\ast}\big), 
  \end{equation}
  as well as
  \begin{equation}\label{ContainmentUngleichungZwei}
    \Vert\lambda_{k+1}-\lambda_\ast\Vert_{Y^\ast}\leq \Vert\lambda_k-\lambda_\ast\Vert_{Y^\ast}+\beta_0\big(\Vert z_k-z_\ast\Vert_Z+\Vert\lambda_k-\widehat{\lambda}_k\Vert_{Y^\ast}\big). 
  \end{equation}
  We show the containment of $\big((z_k,\lambda_k)\big)_{k\in\mathbb{N}}$ by induction. Choose $r_0>0$ small enough that
  \begin{equation*}
    r_1\defas 2r_0\left(1+\frac{\beta_0}{1-\varepsilon}\right)\leq\frac{\delta}{2}.
  \end{equation*}
  Assume that for
  $(z_0,\lambda_0)\in\mathcal{B}_{r_0}(z_\ast,\lambda_\ast)$, there exists $j\in\mathbb{N}$ such that $(z_i,\lambda_i)\in\mathcal{B}_{r_1}(z_\ast,\lambda_\ast)$ and $\rho_i$ satisfies~\eqref{SchrankenFuerRho} for all $i=1,\ldots,j$. Since
  \begin{equation*}
    (z_j,\lambda_j)\in\mathcal{B}_{r_1}(z_\ast,\lambda_\ast)\subseteq\mathcal{B}_\delta(z_\ast,\lambda_\ast),
  \end{equation*}
  for $p=(z_j,\lambda_j,\lambda_j)$, there exists a unique solution $(z_{j+1},\lambda_{j+1})\in\mathcal{N}(\rho_j)$ to~\eqref{HilfsProblem15}. Using~\eqref{ContainmentUngleichungEins}, we get
  \begin{equation*}
  	\begin{aligned}
    \Vert z_k-z_\ast\Vert_Z+\Vert\lambda_k-\widehat{\lambda}_k\Vert_{Y^\ast} &\leq \varepsilon^k\big(\Vert z_0-z_\ast\Vert_Z+\Vert\lambda_0-\widehat{\lambda}_0\Vert_{Y^\ast}\big) \\
    &\leq\varepsilon^k\big(\Vert z_0-z_\ast\Vert_Z+\Vert\lambda_0-\lambda_\ast\Vert_{Y^\ast}\big)\\
    &\leq r_0\leq r_1/2
    \end{aligned}
  \end{equation*}
  for all $0\leq k\leq j+1$. Together with~\eqref{ContainmentUngleichungZwei}, \RepA{by the exact same steps as in~\cite[Equation (44)]{hager1999stabilized}}, this yields
  \begin{equation*}
  	\begin{aligned}
    \Vert\lambda_{j+1}-\lambda_\ast\Vert_{Y^\ast} &\leq \Vert\lambda_j-\lambda_\ast\Vert_{Y^\ast}+\beta_0\varepsilon^j\big(\Vert z_0-z_\ast\Vert_Z+\Vert\lambda_0-\widehat{\lambda}_0\Vert_{Y^\ast}\big) \\
    &\leq \Vert\lambda_0-\lambda_\ast\Vert_{Y^\ast}+\beta_0\big(\Vert z_0-z_\ast\Vert_Z+\Vert\lambda_0-\widehat{\lambda}_0\Vert_{Y^\ast}\big)\sum_{k=0}^j\varepsilon^k \\
    &\leq \Vert\lambda_0-\lambda_\ast\Vert_{Y^\ast}+\beta_0\big(\Vert z_0-z_\ast\Vert_Z+\Vert\lambda_0-\widehat{\lambda}_0\Vert_{Y^\ast}\big)\sum_{k=0}^\infty\varepsilon^k \\
    &= \Vert\lambda_0-\lambda_\ast\Vert_{Y^\ast}+\frac{\beta_0}{1-\varepsilon}\big(\Vert z_0-z_\ast\Vert_Z+\Vert\lambda_0-\widehat{\lambda}_0\Vert_{Y^\ast}\big) \\
    &\leq \Vert\lambda_0-\lambda_\ast\Vert_{Y^\ast}+\frac{\beta_0}{1-\varepsilon}\big(\Vert z_0-z_\ast\Vert_Z+\Vert\lambda_0-\lambda_\ast\Vert_{Y^\ast}\big) \\
    &\leq r_0+\frac{\beta_0 r_0}{1-\varepsilon} \leq\frac{r_1}{2}.
    \end{aligned}
  \end{equation*}
  Therefore,
  \begin{equation*}
    \Vert z_{j+1}-z_\ast\Vert_Z+\Vert\lambda_{j+1}-\widehat{\lambda}_{j+1}\Vert_{Y^\ast}\leq \frac{r_1}{2}+\frac{r_1}{2}=r_1,
  \end{equation*}
  showing $(z_{j+1},\lambda_{j+1})\in\mathcal{B}_{r_1}(z_\ast,\lambda_\ast)$ and completing the induction. 
  \subsection*{Part 3: Local minimizer}\label{Part3}
  As stated in \Cref{sec:sqpsub}, if $(z_{k+1},\lambda_{k+1})$ is a solution to~\eqref{HilfsProblem15}, then $\lambda_{k+1}$ maximizes the cost function in~\eqref{StabilizedProblem} for $z=z_{k+1}$. By~\cite[Lemma~4]{Dontchev1993a}, the mapping $z\mapsto\lambda_{\text{max}}(z)$ is continuous. Therefore, using~\eqref{ZweiteKKT}, we obtain
  \begin{equation*}
    G(z_{k})+G^\prime(z_k)(z-z_{k})-\rho_k\RF(\lambda_{\text{max}}(z)-\lambda_k)
    = 0
  \end{equation*}
  for $z$ close to $z_{k+1}$. Hence, we have
  \begin{equation*}
    \lambda_{\text{max}}(z) =
    \lambda_k+\RIF\frac{G(z_k)+G^\prime(z_k)(z-z_k)}{\rho_k}.
  \end{equation*}
  Substituting into~\eqref{StabilizedProblem}, the cost function $C(z)\defas L_k(z,\lambda_{\text{max}}(z))$ now reads
  \begin{equation*}
  	\begin{aligned}
    C(z) = \big\langle f^\prime(z_k),z-z_k&\big\rangle_{Z^\ast,Z} \\
        &+ \frac{1}{2}\big\langle \mathcal{L}^{\prime\prime}_{zz}(z_k,\lambda_k)(z-z_k),z-z_k\big\rangle_{Z^\ast,Z} \\
    	& +\big\langle \lambda_k, G(z_k)+G^\prime(z_k)(z-z_k)\big\rangle_{Y^\ast,Y} \\
    & + \frac{1}{\rho_k}\big( G(z_k)+G^\prime(z_k)(z-z_k),G(z_k)+G^\prime(z_k)(z-z_k)\big)_Y\\
    & - \frac{\rho_k}{2}\left\Vert\frac{1}{\rho_k}\big[G(z_k)+G^\prime(z_k)(z-z_k)\big]\right\Vert_Y^2.
    \end{aligned}
  \end{equation*}
  Since 
  \begin{equation*}
    C^{\prime\prime}(z)=\mathcal{L}^{\prime\prime}_{zz}(z_k,\lambda_k)+\frac{1}{\rho_k}G^\prime(z_k)^\ast\RIF
    G^\prime(z_k),
  \end{equation*}
  \Cref{Lem:CoerciveRetten} yields strong convexity of $C(z)$. Furthermore, we have
  \begin{equation*}
  	\begin{aligned}
    C^\prime(z_{k+1}) &= f^\prime(z_k)+G^\prime(z_k)^\ast\left(\lambda_k+\RIF\frac{G(z_k)+G^\prime(z_k)(z_{k+1}-z_k)}{\rho_k}\right) \\
    &\qquad+\mathcal{L}^{\prime\prime}_{zz}(z_k,\lambda_k)(z_{k+1}-z_k) \\
    &= f^\prime(z_k)+G^\prime(z_k)^\ast\lambda_{\text{max}}(z_{k+1})+\mathcal{L}^{\prime\prime}_{zz}(z_k,\lambda_k)(z_{k+1}-z_k)\\
    &= \mathcal{L}^\prime_z(z_{k+1},\lambda_{k+1})+\mathcal{L}^{\prime\prime}_{zz}(z_k,\lambda_k)(z_{k+1}-z_k)\\
    &= 0,
    \end{aligned}
  \end{equation*}
  by~\eqref{ErsteKKT}. Since $C$ is strongly convex and ${C^\prime(z_{k+1})=0}$, $z_{k+1}$ is a local minimizer of $C$. 
  \subsection*{Part 4: Estimates for $E_k$}\label{Part4}
  All that is left, is to prove~\eqref{DefinitionVonEkQuer},~\eqref{DefinitionVonEkQuerQuer} and~\eqref{EkQuadratKleinerEkQuerQuer}. First, we define
  \begin{equation*}
    F(z)\defas \begin{pmatrix}
      \mathcal{L}^\prime_z(z_k,\widehat{\lambda}_k)+\mathcal{L}^{\prime\prime}_{zz}(z_k,\lambda_k)(z-z_k) \\
      G(z_k)-G(z)+G^\prime(z_k)(z-z_k)-\rho_k\RF(\widehat{\lambda}_k-\lambda_k)	
    \end{pmatrix}.
  \end{equation*}
  Using a Taylor series expansion of $F$ around $z_\ast$ yields
  \begin{equation}\label{FTaylorTheo1Beweis}
    \begin{aligned}
      F(z_\ast)&=F(z_k)+F^\prime(z_\ast)(z_\ast-z_k)+\mathcal{O}(\Vert z_\ast-z_k\Vert_Z^2) \\
      &= \begin{pmatrix}
        \mathcal{L}^\prime_z(z_k,\widehat{\lambda}_k)+\mathcal{L}^{\prime\prime}_{zz}(z_k,\lambda_k)(z_\ast-z_k)\\
        -\rho_k\RF(\widehat{\lambda}_k-\lambda_k) +         \big[G^\prime(z_k)-G^\prime(z_\ast)\big](z_\ast-z_k)
      \end{pmatrix} +\mathcal{O}(\Vert z_\ast-z_k\Vert_Z^2). 
    \end{aligned}
  \end{equation}
  An additional Taylor series expansion, this time of $\mathcal{L}^\prime_z(\cdot,\widehat{\lambda}_k)$ around $z_\ast$, lets us further simplify this expression:
  \begin{equation*}
    \mathcal{L}^\prime_z(z_k,\widehat{\lambda}_k)=\underbrace{\mathcal{L}^\prime_z(z_\ast,\widehat{\lambda}_k)}_{=0}+\mathcal{L}^{\prime\prime}_{zz}(z_\ast,\widehat{\lambda}_k)(z_k-z_\ast)+ \mathcal{O}(\Vert z_\ast-z_k\Vert_Z^2).
  \end{equation*}
  Substituting into~\eqref{FTaylorTheo1Beweis}, we obtain
  \begin{equation*}
    F(z_\ast)=\begin{pmatrix}
      \big[ \mathcal{L}^{\prime\prime}_{zz}(z_k,\lambda_k)-\mathcal{L}^{\prime\prime}_{zz}(z_\ast,\widehat{\lambda}_k)\big](z_\ast-z_k) \\
      \big[G^\prime(z_k)-G^\prime(z_\ast) \big](z_\ast-z_k)-\rho_k\RF(\widehat{\lambda}_k-\lambda_k)
    \end{pmatrix}+\mathcal{O}(\Vert z_\ast-z_k\Vert_Z^2).
  \end{equation*}
  Taking norms and utilizing the Lipschitz continuity of all occurring derivatives yields
  \begin{equation*}
  	\begin{aligned}
    E_k &= \Vert F(z_\ast)\Vert_{Z^\ast \times Y} \\
    &\leq c\Big(\Vert z_k-z_\ast\Vert_Z^2+\Vert\lambda_k-\widehat{\lambda}_k\Vert_{Y^\ast}\Vert z_k-z_\ast\Vert_Z+\rho_k\Vert\lambda_k-\widehat{\lambda}_k\Vert_{Y^\ast}\Big)\\
    &=c\overline{E}_k, 
    \end{aligned}
  \end{equation*}
  showing~\eqref{DefinitionVonEkQuer}. Now, applying Young's inequality gives
  \begin{equation*}
    E_k\leq c\overline{E}_k\leq c\Big(\Vert z_k-z_\ast\Vert_Z^2+\Vert\lambda_k-\widehat{\lambda}_k\Vert_{Y^\ast}^2+\rho_k\Vert\lambda_k-\widehat{\lambda}_k\Vert_{Y^\ast}\Big)=c\overline{\overline{E}}_k, 
  \end{equation*}
  which proves~\eqref{DefinitionVonEkQuerQuer}.
  On the other hand, estimating $E_k$ with the triangle inequality yields
  \begin{equation*}
  	\begin{aligned}
    E_k &\le \Vert \mathcal{L}^\prime_z(z_k,\widehat{\lambda}_k)\Vert_{Z^\ast}+\Vert\mathcal{L}^{\prime\prime}_{zz}(z_k,\lambda_k)(z_\ast-z_k)\Vert_{Z^\ast} +\Vert G(z_k)-G(z_\ast)\Vert_Y \\
    &\qquad +\Vert G^\prime(z_\ast)(z_\ast-z_k)\Vert_Y+\rho_k\Vert\widehat{\lambda}_k-\lambda_k\Vert_{Y^\ast}.
    \end{aligned}
  \end{equation*}
  By local Lipschitz continuity of $G(\cdot)$, $\mathcal{L}^{\prime\prime}_{zz}(z_k,\lambda_k)\cdot$ and $G^\prime(z_\ast)\cdot$, we get
  \begin{equation*}
  	\begin{aligned}
    E_k &\le \Vert \mathcal{L}^\prime_z(z_k,\widehat{\lambda}_k)\Vert_{Z^\ast} +c\big(\Vert z_\ast-z_k\Vert_Z+\Vert\widehat{\lambda}-\lambda_k\Vert_{Y^\ast}\big) \\
    &\le \Vert \mathcal{L}^\prime_z(z_k,\widehat{\lambda}_k)-\underbrace{\mathcal{L}^\prime(z_\ast,\widehat{\lambda}_k)}_{=0}\Vert_{Z^\ast} +c\big(\Vert z_\ast-z_k\Vert_Z+\Vert\widehat{\lambda}-\lambda_k\Vert_{Y^\ast}\big)\\
    &\le c\big(\Vert z_\ast-z_k\Vert_Z+\Vert\widehat{\lambda}-\lambda_k\Vert_{Y^\ast}\big).
    \end{aligned}
  \end{equation*}
  Therefore, we also have $E_k^2\leq c\overline{\overline{E}}_k$.
\end{proof}
\section{Error estimates}
\label{sec:errors}
As~\eqref{AbschTheoremBeides} shows, choosing $\rho_k$ according to the total error in the current step is needed to guarantee local quadratic convergence. To obtain a suitable bound to said quantity, we follow the ideas presented in~\cite{Hager1999} to show that $\rho_k$ can be chosen proportional to the known quantity $\Vert\mathcal{L}_x^\prime(x_k,\lambda_k)\Vert_{X^\ast} + \Vert G(x_k)\Vert_{Y}$, see \Cref{Prop:ErrorEstimate}.
\begin{notation}
  Under the assumptions of \Cref{Theorem1}, we set $Q\defas
  \mathcal{L}^{\prime\prime}_{zz}(z_\ast,\lambda_\ast)$ and $B\defas G^\prime(z_\ast)$. Furthermore, we define $\mathcal{F}\colon Z\times Y^\ast\to  Z^\ast\times Y$,
  \begin{equation}
    \mathcal{F}(z,\lambda)\defas\begin{pmatrix}
      Qz + B^\ast\lambda\\
      Bz
    \end{pmatrix}
  \end{equation}
  and introduce the following auxiliary problem:
  \begin{equation}\label{HilfsproblemAbstand}
    \text{For $\psi\in Z^\ast\times Y$, find $(z,\lambda)$ such that
    }\psi=\mathcal{F}(z,\lambda).
  \end{equation}
\end{notation}
First up, we show that the $z$-components of solutions to~\eqref{HilfsproblemAbstand} depend Lipschitz continuously on $\psi$.
\begin{lemma}\label{Lem:ErrorEstimateAbschaetzung}
  Let $(\bar{z},\bar{\lambda})$ be a solution to~\eqref{HilfsproblemAbstand} corresponding to $\bar{\psi}$, which satisfies~\eqref{SOSC}. Then there exists $\beta>0$ with the property that if $(z,\lambda)$ is a solution to~\eqref{HilfsproblemAbstand} corresponding to $\psi$, then
  \begin{equation*}
    \Vert z-\bar{z}\Vert_Z\le\beta\Vert\psi-\bar{\psi}\Vert_{Z^\ast\times Y}.
  \end{equation*}
\end{lemma}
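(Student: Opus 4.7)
The strategy is to exploit the Fredholm structure of $\mathcal{F}$: once the kernel of $\mathcal{F}$ is pinned down, the $z$-component of any preimage turns out to be unique, and the open mapping theorem applied to the quotient delivers the Lipschitz bound on $z$.

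First I would verify that \Cref{Lem:MatrixFredholm} applies to $\mathcal{F}$. The operator $B = G^\prime(z_\ast) \in \mathfrak{F}(Z,Y)$ by~\ref{A4}, while $Q = \mathcal{L}^{\prime\prime}_{zz}(z_\ast,\lambda_\ast)$ is self-adjoint; combined with \Cref{rem:SGCAndSOSC}, the hypothesis~\eqref{SOSC} supplies the coercivity $\langle Qz,z\rangle_{Z^\ast,Z} \ge \alpha\Vert z\Vert_Z^2$ on $\mathcal{N}(B)$. Thus $\mathcal{F}$ is Fredholm, and the argument in the proof of \Cref{Lem:MatrixFredholm} furthermore identifies
\begin{equation*}
  \mathcal{N}(\mathcal{F}) = \{0\} \times \mathcal{N}(B^\ast).
\end{equation*}
As a direct consequence, the $z$-component of any solution of \eqref{HilfsproblemAbstand} is uniquely determined by $\psi$: two solutions with the same right-hand side differ by an element of $\mathcal{N}(\mathcal{F})$, whose first coordinate is $0$. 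In particular, $z - \bar{z}$ depends only on $\psi - \bar{\psi}$, regardless of the choice of multipliers.

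To close the argument, I would invoke the open mapping theorem. Since $\mathcal{F}$ is Fredholm, $\mathcal{R}(\mathcal{F})$ is closed in $Z^\ast \times Y$, so the induced map
\begin{equation*}
  \tilde{\mathcal{F}}\colon \bigl(Z\times Y^\ast\bigr)/\mathcal{N}(\mathcal{F}) \to \mathcal{R}(\mathcal{F})
\end{equation*}
is a continuous linear bijection between Banach spaces, hence has bounded inverse. This yields $\beta>0$ such that every $\tilde{\psi}\in\mathcal{R}(\mathcal{F})$ admits a preimage $(w,\mu)$ under $\mathcal{F}$ with
\begin{equation*}
  \Vert w\Vert_Z + \dist\bigl(\mu,\mathcal{N}(B^\ast)\bigr) \le \beta\Vert\tilde{\psi}\Vert_{Z^\ast\times Y}.
\end{equation*}
Applied with $\tilde{\psi} = \psi - \bar{\psi}\in\mathcal{R}(\mathcal{F})$, the uniqueness of the $z$-component forces $w = z - \bar{z}$, and the desired estimate follows.

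I do not expect a serious obstacle here: once \Cref{Lem:MatrixFredholm} is available, the rest is a standard Fredholm/open-mapping argument. The only subtlety is being careful to separate the uniquely determined $z$-component from the genuinely nonunique $\lambda$-component, which is why one works in the quotient rather than attempting to invert $\mathcal{F}$ directly.
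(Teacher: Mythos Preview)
Your proposal is correct and follows essentially the same approach as the paper: both identify $\mathcal{N}(\mathcal{F})=\{0\}\times\mathcal{N}(B^\ast)$ via \Cref{Lem:MatrixFredholm} and then use the resulting injectivity on the complement of the kernel to obtain the Lipschitz bound on the $z$-component. The paper phrases this concretely by splitting $\lambda-\bar{\lambda}=y+y^\perp$ with $y\in\mathcal{N}(B^\ast)$ and invoking the lower bound for $\mathcal{F}$ on $Z\times\mathcal{N}(B^\ast)^\perp$, whereas you package the same fact abstractly via the quotient map and the open mapping theorem; the two formulations are equivalent.
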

\begin{proof}
  Given $(z,\lambda)$ with $\psi=\mathcal{F}(z,\lambda)$ and $(\bar{x},\bar{\psi})$ with $\bar{\psi}=\mathcal{F}(\bar{x},\bar{\lambda})$, we use the linearity of $\mathcal{F}$ to obtain
  \begin{equation*}
    \Vert\psi-\bar{\psi}\Vert_{Z^\ast\times Y}=\Vert \mathcal{F}(z,\lambda)-\mathcal{F}(\bar{z},\bar{\lambda})\Vert_{Z^\ast\times Y}=\Vert \mathcal{F}(z-\bar{z},\lambda-\bar{\lambda})\Vert_{Z^\ast\times Y}.
  \end{equation*}
  For $\lambda-\bar{\lambda}\in Y^\ast$, we use the decomposition
  \begin{equation}
    Y =\mathcal{N}(B^\ast)\oplus\mathcal{N}(B^\ast)^\perp
  \end{equation}
  and write $\lambda-\bar{\lambda}= y+y^\perp.$ Thus, we get
  \begin{equation}\label{BeweisLemmaError1}
    \begin{aligned}
      \Vert \mathcal{F}(z-\bar{z},\lambda-\bar{\lambda})\Vert_{Z^\ast\times Y} &= \Vert \mathcal{F}(z-\bar{z},y+y^\perp)\Vert_{Z^\ast\times Y}\\
      &=\Vert
      \mathcal{F}(z-\bar{z},y^\perp)+\mathcal{F}(0,y)\Vert_{Z^\ast\times
        Y}.
    \end{aligned}
  \end{equation}
  Since $y\in\mathcal{N}(B^\ast)$, we have
  \begin{equation}\label{BeweisLemmaError2}
    \mathcal{F}(0,y)=\begin{pmatrix}
      B^\ast y\\
      0
    \end{pmatrix}=0.
  \end{equation}
  By \Cref{Lem:MatrixFredholm}, we obtain
  \begin{equation}\label{BeweisLemmaError3}
    \Vert \mathcal{F}(z-\bar{z},y^\perp)\Vert_{Z^\ast\times Y}\ge \tau\big(\Vert z-\bar{z}\Vert_Z+\Vert y^\perp\Vert_{Y^\ast}\big) \ge\tau\Vert z-\bar{z}\Vert_Z
  \end{equation}
  for some $\tau>0$.  Combining~\eqref{BeweisLemmaError1},~\eqref{BeweisLemmaError2} and~\eqref{BeweisLemmaError3} yields the claim.
\end{proof}

\begin{proposition}\label{Prop:ErrorEstimate}
  Under the assumptions of \Cref{Theorem1}, there exists a neighborhood $\mathcal{U}$ of $(z_\ast,\lambda_\ast)$ and a constant $\gamma>0$ such that
  \begin{equation}\label{ErgebnisPropErrorEstimate}
    \Vert z-z_\ast\Vert_Z+\Vert \lambda-\widehat{\lambda}\Vert_{Y^\ast}\le\gamma \big(\Vert \mathcal{L}^\prime_z(z,\lambda)\Vert_{Z^\ast}+\Vert G(z)\Vert_Y\big)\quad\text{for all }(z,\lambda)\in\mathcal{U}.
  \end{equation}
\end{proposition}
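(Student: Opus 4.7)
The plan is to reduce the nonlinear residual on the left-hand side of~\eqref{ErgebnisPropErrorEstimate} to the linearized system $\mathcal{F}$ via Taylor expansion, and then invoke Lemma~\ref{Lem:ErrorEstimateAbschaetzung} together with the metric-regularity type estimate~\eqref{KegelAbstandLambda} already derived in the proof of Theorem~\ref{Theorem1}. Throughout, $\widehat{\lambda}\in\argmin\{\Vert\lambda-\mu\Vert_{Y^\ast}\,:\,\mu\in\Lambda(z_\ast)\}$ as in~\ref{Theorem1Punkt3}.

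First, I would Taylor-expand $\mathcal{L}^\prime_z(\cdot,\lambda)$ around $z_\ast$ and $G$ around $z_\ast$, and use $\mathcal{L}^\prime_z(z_\ast,\widehat{\lambda}) = 0$ (since $\widehat{\lambda}\in\Lambda(z_\ast)$), $G(z_\ast) = 0$ by~\ref{A4}, the linearity of $\mathcal{L}$ in $\lambda$, and $\mathcal{L}^{\prime\prime}_{zz}(z_\ast,\widehat{\lambda}) = Q + \mathcal{O}(\Vert\widehat{\lambda}-\lambda_\ast\Vert_{Y^\ast})$ to write
\begin{equation*}
\begin{pmatrix}\mathcal{L}^\prime_z(z,\lambda)\\ G(z)\end{pmatrix} = \mathcal{F}(z-z_\ast,\lambda-\widehat{\lambda}) + R(z,\lambda),
\end{equation*}
where $\Vert R(z,\lambda)\Vert_{Z^\ast\times Y}\le c\Vert z-z_\ast\Vert_Z\bigl(\Vert z-z_\ast\Vert_Z + \Vert\lambda-\widehat{\lambda}\Vert_{Y^\ast} + \Vert\widehat{\lambda}-\lambda_\ast\Vert_{Y^\ast}\bigr)$. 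Since $\widehat{\lambda}$ projects $\lambda$ onto $\Lambda(z_\ast)\ni\lambda_\ast$, we have $\Vert\widehat{\lambda}-\lambda_\ast\Vert_{Y^\ast}\le 2\Vert\lambda-\lambda_\ast\Vert_{Y^\ast}$, so the prefactor of $R$ becomes arbitrarily small as $(z,\lambda)\to(z_\ast,\lambda_\ast)$.

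Next, I would apply Lemma~\ref{Lem:ErrorEstimateAbschaetzung} with $(\bar z,\bar\lambda) = (0,0)$ (so $\bar\psi = 0$; the required coercivity of $Q$ on $\mathcal{N}(B)$ holds by Remark~\ref{rem:SGCAndSOSC}) and $(z-z_\ast,\lambda-\widehat{\lambda})$ corresponding to $\psi = \mathcal{F}(z-z_\ast,\lambda-\widehat{\lambda})$. Combined with the triangle inequality, this yields
\begin{equation*}
\Vert z-z_\ast\Vert_Z \le \beta\bigl(\Vert\mathcal{L}^\prime_z(z,\lambda)\Vert_{Z^\ast} + \Vert G(z)\Vert_Y + \Vert R(z,\lambda)\Vert_{Z^\ast\times Y}\bigr).
\end{equation*}
For the multiplier, I would reuse the reasoning behind~\eqref{KegelAbstandLambda}: by~\cite[Theorem~6]{Ngai2005} applied to the linear system describing $\Lambda(z_\ast)$, combined with the Lipschitz continuity of $\mathcal{L}^\prime_z(\cdot,\lambda)$, we have
\begin{equation*}
\Vert\lambda-\widehat{\lambda}\Vert_{Y^\ast} \le c\Vert\mathcal{L}^\prime_z(z_\ast,\lambda)\Vert_{Z^\ast} \le c\Vert\mathcal{L}^\prime_z(z,\lambda)\Vert_{Z^\ast} + c\Vert z-z_\ast\Vert_Z.
\end{equation*}

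Adding these two inequalities, substituting the bound on $R$, and shrinking $\mathcal{U}$ so that the prefactor of $R$ is strictly smaller than a fixed small constant, the right-hand side terms involving $\Vert z-z_\ast\Vert_Z$ and $\Vert\lambda-\widehat{\lambda}\Vert_{Y^\ast}$ can be absorbed into the left-hand side, yielding~\eqref{ErgebnisPropErrorEstimate}. The main obstacle is the bookkeeping of this absorption step: the coefficient of $\Vert\lambda-\widehat{\lambda}\Vert_{Y^\ast}$ in $R$ is small only when $\Vert z-z_\ast\Vert_Z$ is small, so one must ensure that all three error-like quantities in the prefactor of $R$ tend to zero simultaneously as $\mathcal{U}$ is shrunk, which is precisely what the bound $\Vert\widehat{\lambda}-\lambda_\ast\Vert_{Y^\ast}\le 2\Vert\lambda-\lambda_\ast\Vert_{Y^\ast}$ guarantees.
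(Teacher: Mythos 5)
Your proposal is correct and follows essentially the same route as the paper: reduce the nonlinear residual to the linear operator $\mathcal{F}$ via Taylor expansion (your pair $\big((0,0),(z-z_\ast,\lambda-\widehat{\lambda})\big)$ coincides, by linearity of $\mathcal{F}$ and $B^\ast\widehat{\lambda}=B^\ast\lambda_\ast=-f^\prime(z_\ast)$, with the paper's choice $\psi_1,\psi_2$), estimate the $z$-component via \Cref{Lem:ErrorEstimateAbschaetzung}, handle the multiplier via~\eqref{KegelAbstandLambda}, and absorb the $\varepsilon\Vert z-z_\ast\Vert_Z$ remainder by shrinking $\mathcal{U}$. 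The absorption bookkeeping you flag as the main obstacle is handled exactly as you describe.
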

\begin{proof}
  For a suiting neighborhood $\mathcal{U}$ of $(z_\ast,\lambda_\ast)$, that will be chosen later, let us define
  \begin{equation}
    \psi_1\defas\begin{pmatrix}
      Qz + B^\ast\lambda\\
      Bz
    \end{pmatrix}
  \end{equation}
  and 
  \begin{equation}
    \psi_2\defas\begin{pmatrix}
      Qz_\ast - f^\prime(z_\ast)\\
      Bz_\ast
    \end{pmatrix}.
  \end{equation}
Obviously, we have $(z,\lambda)\in\mathcal{F}^{-1}(\psi_1)$. Since $\mathcal{L}^\prime_z(z_\ast,\lambda_\ast)=0$, it holds $-f^\prime(z_\ast)=B^\ast\lambda_\ast$. Hence, we obtain
  \begin{equation*}
    \mathcal{F}(z_\ast,\lambda_\ast)=\begin{pmatrix}
      Qz_\ast + B^\ast\lambda_\ast\\
      Bz_\ast
    \end{pmatrix} = \begin{pmatrix}
      Qz_\ast-f^\prime(z_\ast)\\
      Bz_\ast
    \end{pmatrix}=\psi_2.
  \end{equation*}
  By \Cref{Lem:ErrorEstimateAbschaetzung}, there exists $c>0$ such that
  \begin{equation}\label{BeweisPropError1}
    \Vert z-z_\ast\Vert_Z\le c\Vert\psi_1-\psi_2\Vert_{Z^\ast\times Y}=c\left\Vert \begin{pmatrix}
        Q(z-z_\ast)+\mathcal{L}^\prime_z(z_\ast,\lambda)\\
        B(z-z_\ast)
      \end{pmatrix}\right\Vert_{Z^\ast\times Y}. 
  \end{equation}
  Utilizing $G(z_\ast)=0$, we have
  \begin{equation}\label{BeweisPropError2}
    \begin{pmatrix}
      Q(z-z_\ast)+\mathcal{L}^\prime_z(z_\ast,\lambda)\\
      B(z-z_\ast)
    \end{pmatrix} =  \begin{pmatrix}
      Q(z-z_\ast)+\mathcal{L}^\prime_z(z_\ast,\lambda)-\mathcal{L}^\prime_z(z,\lambda)+\mathcal{L}^\prime_z(z,\lambda)\\
      B(z-z_\ast)-G(z)+G(z)+G(z_\ast)
    \end{pmatrix}.
  \end{equation}
  Now, for any $\varepsilon >0$, by~\ref{A3}, there exists a neighborhood $\mathcal{U}$ of $(z_\ast,\lambda_\ast)$ such that 
  \begin{equation}\label{BeweisPropError3}
    \left\Vert\begin{pmatrix}
        Q(z-z_\ast)+\mathcal{L}^\prime_z(z_\ast,\lambda)-\mathcal{L}^\prime_z(z,\lambda)\\
        B(z-z_\ast)-G(z)+G(z_\ast)
      \end{pmatrix}\right\Vert_{Z^\ast\times Y} \le\varepsilon \Vert z-z_\ast\Vert_Z 
  \end{equation}
  for all $(z,\lambda)\in\mathcal{U}$. Together,~\eqref{BeweisPropError1},~\eqref{BeweisPropError2} and~\eqref{BeweisPropError3} yield
  \begin{equation}\label{BeweisPropError3.5}
    \Vert z-z_\ast\Vert_Z\le c\left\Vert \begin{pmatrix}
	\mathcal{L}^\prime_z(z,\lambda)\\
	G(z)
      \end{pmatrix}\right\Vert_{Z^\ast\times Y}\quad\text{for all }(z,\lambda)\in\mathcal{U}. 
  \end{equation}
  As we have seen in the proof of \Cref{Theorem1}, for some $c>0$ it holds
  \begin{equation}\label{BeweisPropError3.75}
    \Vert \lambda-\widehat{\lambda}\Vert_{Y^\ast}\le c\Vert\mathcal{L}^\prime_z(z_\ast,\lambda)\Vert_{Z^\ast},
  \end{equation}
  see~\eqref{KegelAbstandLambda}. Utilizing the Lipschitz continuity of $\mathcal{L}^\prime_z(\cdot,\lambda)$ once more, we obtain
  \begin{equation}\label{BeweisPropError4}
    \begin{aligned}
      \Vert \mathcal{L}^\prime_z(z_\ast,\lambda)\Vert_{Z^\ast} &\le \Vert \mathcal{L}^\prime_z(z,\lambda)\Vert_{Z^\ast}+\Vert \mathcal{L}^\prime_z(z_\ast,\lambda)-\mathcal{L}^\prime_z(z,\lambda)\Vert_{Z^\ast}\\
      &\le \Vert \mathcal{L}^\prime_z(z,\lambda)\Vert_{Z^\ast}+\Vert G(z)\Vert_Y +c\Vert z-z_\ast\Vert_Z
    \end{aligned}
  \end{equation}
for all $(z,\lambda)\in\mathcal{U}$. The claim now follows from~\eqref{BeweisPropError3.5}-\eqref{BeweisPropError4}.
\end{proof}

\section{A simple example}
\label{sec:application}
Consider the following optimal control problem:
\begin{equation}\label{eq:AnwendungProblemstellung}
	\begin{aligned}
		\min\quad&J(u,q)\defas\frac{1}{2}\Vert u-u_d\Vert_{L^2(\Omega)}^2+\frac{\alpha}{2}\vert q-q_d\vert^2, \\
		\text{s.t.}\quad&-\Delta u+qu=0\quad\text{in }\Omega, \\
			&u=0\quad\text{on }\partial\Omega.
	\end{aligned}
\end{equation}
Here, $\Omega\subseteq\mathbb{R}^n$, is a bounded domain with $C^2$-boundary, $q\in\mathbb{R}$ is the control variable and $u\in H^1_0(\Omega)$ is the state variable. \RepA{Although~\eqref{eq:AnwendungProblemstellung} is of very simple structure, it still poses problems for many standard approaches, as we do not require $q$ to be positive. Thus, $-q$ can be an eigenvalue of $-\Delta$. As a result, in an optimal solution $(u_\ast,q_\ast)$ to~\eqref{eq:AnwendungProblemstellung}, the state equation need not be uniquely solvable. In this case, it is impossible to satisfy \eqref{RCQ}. However, we will show that~\ref{A1}-\ref{A5} hold.} 

From standard results concerning elliptic equations it is known that for each $q\in\mathbb{R}$, the state equation in~\eqref{eq:AnwendungProblemstellung} admits at least one solution and every solution lies in $H^2(\Omega)\cap H^1_0(\Omega)$. Hence, we set $Z\defas \big(H^2(\Omega)\cap H^1_0(\Omega)\big)\times\mathbb{R}$, $Y\defas L^2(\Omega)$, $K=\{0\}$ and define $G:Z\to Y$,
\begin{equation*}
	G(u,q)\defas -\Delta u+qu.
\end{equation*}
It is obvious that~\ref{A1}-\ref{A3} are satisfied. Thus, we will proceed by showing that \eqref{eq:AnwendungProblemstellung} has at least one local solution and that each local solution satisfies \ref{A4} and~\ref{A5}.

\begin{lemma}\label{lem:AnwendungsproblemExistenz}
For arbitrary $(u_d,q_d)\in Z$, there exists at least one local solution to~\eqref{eq:AnwendungProblemstellung}.
\end{lemma}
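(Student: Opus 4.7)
The plan is to apply the direct method of the calculus of variations to establish the existence of a \emph{global} minimizer of~\eqref{eq:AnwendungProblemstellung}, which in particular furnishes the required local minimum. The crucial initial observation is that for every $q\in\mathbb{R}$, the pair $(0,q)$ trivially satisfies the state equation, so the feasible set is non-empty and
\begin{equation*}
  \inf J \leq J(0,q_d) = \tfrac{1}{2}\Vert u_d\Vert_{L^2(\Omega)}^2 < \infty.
\end{equation*}

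Next, I would take a minimizing sequence $(u_n,q_n)_{n\in\mathbb{N}}\subseteq Z$ satisfying the PDE constraint. Since $J(u_n,q_n)$ is bounded, the coercive term $\tfrac{\alpha}{2}\vert q-q_d\vert^2$ yields boundedness of $(q_n)$ in $\mathbb{R}$, while the tracking term yields boundedness of $(u_n)$ in $L^2(\Omega)$. Invoking the state equation in the form $-\Delta u_n = -q_n u_n \in L^2(\Omega)$ together with standard $H^2$-elliptic regularity for the Dirichlet Laplacian on the $C^2$-domain $\Omega$, this upgrades to a uniform bound of $(u_n)$ in $H^2(\Omega)\cap H^1_0(\Omega)$.

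Then, by reflexivity together with the compact embedding $H^2(\Omega)\cap H^1_0(\Omega)\hookrightarrow L^2(\Omega)$, I would extract a subsequence with $q_n\to q^\ast$ in $\mathbb{R}$, $u_n\rightharpoonup u^\ast$ in $H^2(\Omega)\cap H^1_0(\Omega)$, and $u_n\to u^\ast$ strongly in $L^2(\Omega)$. Passing to the limit in the state equation is the one step that requires attention: the Laplacian part converges weakly in $L^2(\Omega)$, whereas the nonlinear product $q_n u_n$ converges strongly in $L^2(\Omega)$, because the scalar factor converges in $\mathbb{R}$ and the $L^2$-factor converges strongly. This yields $-\Delta u^\ast + q^\ast u^\ast = 0$, i.e., $(u^\ast,q^\ast)$ is feasible.

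Finally, $J$ is continuous with respect to strong $L^2(\Omega)$-convergence in $u$ and convergence in $\mathbb{R}$ in $q$, so $J(u^\ast,q^\ast) = \lim_{n\to\infty} J(u_n,q_n) = \inf J$, producing a global and hence in particular a local minimizer. The only genuine obstacle is the passage to the limit in the nonlinear product $q_n u_n$, which is resolved cleanly by the combination of finite-dimensionality of the control space and the compact embedding for the state; no constraint qualification is needed here, which is consistent with the motivation for the stabilized SQP framework discussed earlier.
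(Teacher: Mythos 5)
Your proposal is correct and follows essentially the same route as the paper: a direct-method argument in which the tracking and regularization terms bound a minimizing sequence (resp.\ the level set) in $L^2(\Omega)\times\mathbb{R}$, elliptic regularity for $-\Delta u = -qu$ upgrades this to a bound in $H^2(\Omega)\cap H^1_0(\Omega)$, and one passes to the limit in the state equation. The only cosmetic difference is that you use the compact embedding into $L^2(\Omega)$ to get strong convergence of the product $q_nu_n$, whereas the paper verifies weak continuity of $G$ directly by pairing with a fixed $y^\ast\in Y^\ast$; both close the argument.
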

\begin{proof}
We start by proving that $G:Z\to Y$ is weakly continuous. For this purpose, let $(u_n,q_n)\rightharpoonup (\bar{u},\bar{q})$, i.e., $q_n\rightarrow\bar{q}$ in $\mathbb{R}$ and $u_n\rightharpoonup\bar{u}$ in $H^2(\Omega)\cap H^1_0(\Omega)$. Now, for arbitrary $y^\ast\in Y^\ast$ it holds
\begin{equation*}
    \begin{aligned}
        \big\langle y^\ast, G(u_n,q_n)&-G(\bar{u},\bar{q})\big\rangle_{Y^\ast,Y} \\
        &= \big\langle y^\ast, -\Delta(u_n-\bar{u})\big\rangle_{Y^\ast,Y} + q_n\left\langle y^\ast, u_n-\bar{u}\right\rangle_{Y^\ast,Y} + (q_n-\bar{q})\left\langle y^\ast, \bar{u}\right\rangle_{Y^\ast,Y}.
    \end{aligned}
\end{equation*}
Since $-\Delta$ is linear and $(q_n)_{n\in\mathbb{N}}\subset\mathbb{R}$ is bounded, the expression on the right hand side tends to 0, showing that $G:Z\to Y$ is weakly continuous.

Now, since $J$ is weakly lower semicontinuous, $K$ is weakly closed and $G:Z\to Y$ is weakly continuous, it suffices to show that the level set 
\begin{equation*}
	\mathfrak{N}(0,0)\defas\left\{ (u,q)\in Z\,:\, G(u,q)\in K, J(u,q)\le J(0,0)\right\}
\end{equation*}
is bounded in $Z$. 

For every sequence $(q^k)_{k\in\mathbb{N}}\subseteq\mathbb{R}$ with $\vert q^k\vert\to\infty$, we have $J(u,q^k)\to\infty$ for all $u\in H^2(\Omega)\cap H^1_0(\Omega)$, showing that the $q$-component is bounded. Analogously, the $u$-component is bounded in $L^2(\Omega)$. Furthermore, every $(u,q)\in\mathfrak{N}(0,0)$ satisfies
\begin{equation*}
	\begin{aligned}
		-\Delta u = -qu\quad&\text{in }\Omega,\\
		u=0\quad&\text{on }\partial\Omega.
	\end{aligned}
\end{equation*}
Therefore, elliptic regularity yields
\begin{equation*}
	\Vert u\Vert_{H^2(\Omega)}\le c\big(1+\vert q\vert\big)\Vert u\Vert_{L^2(\Omega)},
\end{equation*}
showing that the $u$-component is also bounded in $H^2(\Omega)$.
\end{proof}
Next up, we show that $G^\prime(u,q):Z\to Y$, given by
\begin{equation*}
	G^\prime(u,q)[d_u,d_q] = -\Delta d_u+qd_u+ud_q,
\end{equation*}
is Fredholm, \RepA{i.e.,~\ref{A4} is satisfied.}
\begin{lemma}
For all $(u,q)\in Z$, we have $G^\prime(u,q)\in\mathfrak{F}(Z,Y)$.
\end{lemma}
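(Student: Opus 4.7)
The plan is to write $G^\prime(u,q)$ as the sum of a Fredholm operator and a compact perturbation; since the Fredholm property is stable under compact perturbations, this will give $G^\prime(u,q)\in\mathfrak{F}(Z,Y)$.

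For the leading part, I would consider $T:Z\to Y$ defined by $T[d_u,d_q]\defas -\Delta d_u$. By standard elliptic theory on the $C^2$-domain $\Omega$, the Dirichlet Laplacian $-\Delta:H^2(\Omega)\cap H^1_0(\Omega)\to L^2(\Omega)$ is a topological isomorphism. Consequently the range of $T$ equals all of $L^2(\Omega)$, so its codimension is zero, and its kernel consists of all pairs $(d_u,d_q)\in Z$ with $-\Delta d_u=0$ and $d_u\in H^1_0(\Omega)$, which forces $d_u=0$. Hence $\mathcal{N}(T)=\{0\}\times\mathbb{R}$, which is one-dimensional, and $T\in\mathfrak{F}(Z,Y)$ (with index $1$).

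For the remainder, define $C:Z\to Y$ by $C[d_u,d_q]\defas q d_u + u d_q$, so that $G^\prime(u,q)=T+C$. The mapping $d_q\mapsto u d_q$ from $\mathbb{R}$ into $L^2(\Omega)$ is of rank at most one and therefore compact. For the other summand, the Rellich-Kondrachov theorem gives a compact embedding $H^2(\Omega)\cap H^1_0(\Omega)\hookrightarrow L^2(\Omega)$; composing with the bounded multiplication $L^2(\Omega)\ni v\mapsto qv\in L^2(\Omega)$ yields a compact map. Thus $C$ is compact, and Fredholmness of $G^\prime(u,q)=T+C$ follows from the standard stability result for Fredholm operators under compact perturbations.

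I do not expect a serious obstacle here. The only step that requires a careful citation is the identification of $-\Delta$ as an isomorphism between $H^2(\Omega)\cap H^1_0(\Omega)$ and $L^2(\Omega)$: one must use not only unique solvability but full $H^2$-regularity of the Dirichlet problem, which is where the $C^2$-regularity of $\partial\Omega$ is genuinely used. All remaining ingredients are standard functional-analytic facts.
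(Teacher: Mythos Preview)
Your argument is correct. Writing $G^\prime(u,q)=T+C$ with $T[d_u,d_q]=-\Delta d_u$ and $C[d_u,d_q]=q d_u+u d_q$, then invoking elliptic regularity to see that $T$ is Fredholm of index $1$, Rellich--Kondrachov to see that $C$ is compact, and finally the stability of the Fredholm property under compact perturbations, is a clean and complete proof.

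The paper proceeds differently: it verifies the two Fredholm conditions directly. For the range it cites an abstract result on elliptic operators to conclude that, for each fixed $d_q$, the map $d_u\mapsto G^\prime(u,q)[d_u,d_q]$ has finite-codimensional range in $L^2(\Omega)$, and hence so does the full operator. For the kernel it distinguishes whether or not the equation $-\Delta d_u+q d_u=-u$ is solvable and reduces both cases to the finite dimensionality of $\mathrm{Eig}(-\Delta;q)$. Your approach is arguably more economical: it avoids both the external citation for the range and the case distinction for the kernel, and it automatically delivers closedness of the range via the compact-perturbation theorem. The paper's approach, on the other hand, makes the structure of $\mathcal{N}(G^\prime(u,q))$ explicit, which may be informative in its own right. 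Either route is perfectly adequate here.
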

\begin{proof}
For the range condition, we use \cite[Theorem 7.3.3]{AgraFredholm} to see that 
\begin{equation*}
	\codim\mathcal{R}\big(G^\prime(u,q)[\cdot,d_q]\big) <\infty
\end{equation*}
for all $(u,q)\in Z$ and every $d_q\in\mathbb{R}$. Hence, 
\begin{equation*}
	\codim\mathcal{R}\big(G^\prime(u,q)\big)\le \codim\mathcal{R}\big(G^\prime(u,q)[\cdot,d_q]\big) <\infty.
\end{equation*}
For the kernel condition, we distinguish two cases. First, assume that 
\begin{equation}\label{eq:KernelHilfsproblemAnwendung}
	\begin{aligned}
		-\Delta d_u+qd_u = -u\quad&\text{in }\Omega,\\
		d_u = 0\quad&\text{on }\partial\Omega
	\end{aligned}
\end{equation}
admits no solution. 
In this case, 
\begin{equation*}
	\mathcal{N}\big(G^\prime(u,q)\big)=\text{Eig}(-\Delta;q)\times\{0\},
\end{equation*}
where $\text{Eig}(-\Delta;q)$ denotes the set of solutions to
\begin{equation*}
	\begin{aligned}
		-\Delta u+qu = 0\quad&\text{in }\Omega,\\
		u = 0\quad&\text{on }\partial\Omega.
	\end{aligned}
\end{equation*}
It is known that this set is of finite dimension, showing the kernel condition in this case.\\
On the other hand, if~\eqref{eq:KernelHilfsproblemAnwendung} admits at least one solution $\bar{d}_u$, then the solution set of~\eqref{eq:KernelHilfsproblemAnwendung} is given by $\bar{d}_u+\text{Eig}(-\Delta;q)$. Therefore, we have
\begin{equation*}
	\text{dim }\mathcal{N}\big(G^\prime(u,q)\big) = \text{dim }\left\{ (d_u,d_q)\in Z\,:\, d_u=d_q\bar{d}_u+\text{Eig}(-\Delta;q)\right\} <\infty,
\end{equation*}
finishing the proof.
\end{proof}
\begin{lemma}\label{lem:AnwendungsproblemSOSC}
For each local solution $(u_\ast,q_\ast)\in X$ to~\eqref{eq:AnwendungProblemstellung} and all $\lambda_\ast\in\Lambda\big((u_\ast,q_\ast)\big)$, there exists $\alpha>0$ large enough, such that~\eqref{SOSC} is fulfilled.
\end{lemma}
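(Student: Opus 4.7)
The plan is to verify coercivity on $\mathcal{N}(G'(u_\ast,q_\ast))$ and invoke \Cref{rem:SGCAndSOSC}, which (since $K=\{0\}$) makes this equivalent to \eqref{SOSC}. First, I would write out the Lagrangian
\begin{equation*}
\mathcal{L}(u,q,\lambda)=\tfrac{1}{2}\Vert u-u_d\Vert_{L^2(\Omega)}^2+\tfrac{\alpha}{2}(q-q_d)^2+\int_\Omega\lambda(-\Delta u+qu)\,dx
\end{equation*}
and compute
\begin{equation*}
\mathcal{L}_{zz}^{\prime\prime}(u_\ast,q_\ast,\lambda_\ast)\bigl[(d_u,d_q),(d_u,d_q)\bigr]=\Vert d_u\Vert_{L^2}^2+\alpha d_q^2+2d_q\int_\Omega \lambda_\ast d_u\,dx.
\end{equation*}

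Next, I would control the cross term by Young's inequality: for every $\varepsilon>0$,
\begin{equation*}
\Bigl|2d_q\int_\Omega\lambda_\ast d_u\Bigr|\le \varepsilon d_q^2+\tfrac{1}{\varepsilon}\Vert\lambda_\ast\Vert_{L^2}^2\Vert d_u\Vert_{L^2}^2.
\end{equation*}
Choosing $\varepsilon=2\Vert\lambda_\ast\Vert_{L^2}^2$ and imposing $\alpha>2\Vert\lambda_\ast\Vert_{L^2}^2$, the Hessian obeys
\begin{equation*}
\mathcal{L}_{zz}^{\prime\prime}\bigl[(d_u,d_q)^2\bigr]\ge \tfrac{1}{2}\Vert d_u\Vert_{L^2}^2+\bigl(\alpha-2\Vert\lambda_\ast\Vert_{L^2}^2\bigr)d_q^2\ge c_0\bigl(\Vert d_u\Vert_{L^2}^2+d_q^2\bigr)
\end{equation*}
for some $c_0>0$. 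This estimate holds on all of $Z$, not just the kernel.

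The main obstacle is that the SOSC requires coercivity with respect to the full $Z$-norm $\Vert d_u\Vert_{H^2}^2+d_q^2$, whereas Young's inequality only yields control in the weaker $L^2\times\mathbb{R}$ norm. Here I would exploit the kernel condition: for $(d_u,d_q)\in\mathcal{N}(G'(u_\ast,q_\ast))$, one has $-\Delta d_u=-q_\ast d_u-u_\ast d_q\in L^2(\Omega)$, so by standard elliptic regularity on the $C^2$ domain $\Omega$, together with $d_u\in H^1_0(\Omega)$,
\begin{equation*}
\Vert d_u\Vert_{H^2(\Omega)}\le C\bigl(\Vert\Delta d_u\Vert_{L^2}+\Vert d_u\Vert_{L^2}\bigr)\le C'\bigl(\Vert d_u\Vert_{L^2}+|d_q|\bigr),
\end{equation*}
with $C'$ depending only on $q_\ast$ and $\Vert u_\ast\Vert_{L^2}$. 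Hence $\Vert d_u\Vert_{H^2}^2+d_q^2\le C''(\Vert d_u\Vert_{L^2}^2+d_q^2)$ on the kernel, which upgrades the bound above to
\begin{equation*}
\mathcal{L}_{zz}^{\prime\prime}\bigl[(d_u,d_q)^2\bigr]\ge \tfrac{c_0}{C''}\Vert(d_u,d_q)\Vert_Z^2\qquad\text{for all }(d_u,d_q)\in\mathcal{N}(G'(u_\ast,q_\ast)).
\end{equation*}

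Thus, choosing $\alpha$ strictly larger than $2\Vert\lambda_\ast\Vert_{L^2}^2$ yields \eqref{eq:SGC}, and by \Cref{rem:SGCAndSOSC} combined with $K=\{0\}$, this is equivalent to \eqref{SOSC}. The only delicate point is the passage from an $L^2$-coercivity to an $H^2$-coercivity; this is resolved by the elliptic lift along the constraint, which costs no additional largeness of $\alpha$.
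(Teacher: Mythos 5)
Your proposal is correct and follows essentially the same route as the paper: compute the Hessian of the Lagrangian, absorb the cross term $2d_q\langle\lambda_\ast,d_u\rangle$ via Young's inequality to obtain coercivity in $L^2(\Omega)\times\mathbb{R}$, and then use elliptic regularity along the kernel equation $-\Delta d_u+q_\ast d_u=-u_\ast d_q$ to upgrade this to coercivity in the full $Z$-norm, concluding via \Cref{rem:SGCAndSOSC} since $K=\{0\}$. The only (immaterial) difference is bookkeeping: you apply the elliptic estimate as a norm equivalence on the kernel after fixing $c_0$, which gives the slightly cleaner threshold $\alpha>2\Vert\lambda_\ast\Vert_{Y^\ast}^2$, whereas the paper folds the $H^2$-lift into the $d_q$-coefficient and so asks for a marginally larger $\alpha$.
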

\begin{proof}
We have
\begin{equation}\label{eq:ZweiteAbleitungVonJ}
	\begin{aligned}
		\left\langle \mathcal{L}^{\prime\prime}_{zz}(x_\ast,\lambda_\ast)d,d\right\rangle_{Z^\ast,Z}&= \Vert d_u\Vert_{L^2(\Omega)}^2+\alpha\vert d_q\vert^2 +2\left\langle\lambda_\ast,d_qd_u\right\rangle_{Y^\ast,Y}\\
		&\ge \Vert d_u\Vert_{L^2(\Omega)}^2+\alpha\vert d_q\vert^2 - 2\Vert\lambda_\ast\Vert_{Y^\ast}\Vert d_u\Vert_{L^2(\Omega)}\vert d_q\vert \\
		&\ge \frac{1}{2}\Vert d_u\Vert_{L^2(\Omega)}^2+\left(\alpha-2\Vert\lambda_\ast\Vert_{Y^\ast}^2\right)\vert d_q\vert^2.
	\end{aligned}		
\end{equation}
Let $(d_u,d_q)\in\mathcal{N}(G^\prime(x_\ast))$, i.e., $(d_u,d_q)$ solves
\begin{equation*}
	\begin{aligned}
		-\Delta d_u+q_\ast d_u = -u_\ast d_q \quad&\text{in }\Omega,\\
		d_u = 0\quad&\text{on }\partial\Omega.
	\end{aligned}
\end{equation*}
By standard results for elliptic regularity, we obtain
\begin{equation*}
\Vert d_u\Vert_{H^2(\Omega)} \le c \big(\Vert d_u\Vert_{L^2(\Omega)}+\Vert q_\ast d_u+u_\ast d_q\Vert_{L^2(\Omega)}\big).
\end{equation*}
Squaring and utilizing Young's inequality yields
\begin{equation*}
	c\Vert d_u\Vert_{H^2(\Omega)}^2 \le \big(\vert q_\ast\vert^2+1\big)\Vert d_u\Vert_{L^2(\Omega)}^2+\Vert u_\ast\Vert_{L^2(\Omega)}^2\vert d_q\vert^2.
\end{equation*}
Inserting into~\eqref{eq:ZweiteAbleitungVonJ} gives
\begin{equation*}
	\left\langle \mathcal{L}^{\prime\prime}_{zz}(x_\ast,\lambda_\ast)d,d\right\rangle_{Z^\ast,Z} \ge \frac{c}{2(\vert q_\ast\vert^2+1)}\Vert d_u\Vert_{H^2(\Omega)}^2 + \left(\alpha-2\Vert\lambda_\ast\Vert_{Y^\ast}^2-\frac{\Vert u_\ast\Vert_{L^2(\Omega)}^2}{2\left(\vert q_\ast\vert^2+1\right)}\right)\vert d_q\vert^2
\end{equation*}
for all $d\in\mathcal{N}(G^\prime(z))$. Since $K=\{0\}$, this shows that~\eqref{SOSC} is satisfied for $\alpha$ large enough, see~\Cref{rem:AssuErkl}. 
\end{proof}
\RepA{In conclusion, we have shown (\Cref{lem:AnwendungsproblemExistenz}) that there exists at least one Solution of~\eqref{eq:AnwendungProblemstellung} and that each local solution satisfies~\ref{A1}-\ref{A4}. Lastly, by choosing $\alpha>0$ in~\eqref{eq:AnwendungProblemstellung} sufficiently large, the solution satisfies~\eqref{SOSC} as well, which, by~\Cref{rem:AssuErkl}, yields~\ref{A5}.}

\section{Optimal control for brittle fracture}
An an application, we consider a tracking type optimal control problem, which is governed by a regularized fracture propagation model. We will introduce the setting only briefly, a more detailed presentation can be found in~\cite{neitzel2017optimal,Neitzel2019,HehlNeitzel2022}.

Given a bounded domain $\Omega\subseteq\mathbb{R}^2$, we assume that its boundary $\partial\Omega$ can be decomposed into a Dirichlet part $\Gamma_D$ and a Neumann part $\Gamma_N$, where each of these subsets has a positive 1-dimensional Hausdorff measure. Moreover, the set $\Omega\cup\Gamma_N$ is assumed to be regular in the sense of Gröger~\cite{Groeger}. Throughout this section, we will denote by $(\cdot,\cdot)$ the usual $L^2$ inner product with corresponding norm $\Vert\cdot\Vert$.

To describe the current state of the system, we introduce the space of admissible displacement fields
\begin{equation*}
    H_D^1(\Omega;\mathbb{R}^2)\defas\big\{ v\in H^1(\Omega;\mathbb{R}^2)\,:\, v=0\text{ on }\Gamma_D\big\}.
\end{equation*}
The current state of the fracture is described by an auxiliary time-dependent phase-field ${\varphi:\Omega\times(0,T)\to\mathbb{R}}$, which assumes values in $[0,1]$ and indicates fractured zones of the domain ($\varphi=0$) as well as nonfractured regions ($\varphi=1$). Choosing suiting parameters $0<\kappa\ll\varepsilon$, the fracture is regularized via the coefficient function
\begin{equation*}
    g(\varphi)\defas(1-\kappa)\varphi^2+\kappa.
\end{equation*}
Given a force $q$ acting on the boundary $\Gamma_N$, the state $\big(u(t),\varphi(t)\big)$ of the system is obtained by minimizing the regularized total energy functional
\begin{equation}
    \frac{1}{2}\big( g(\varphi)\mathbb{C}e(u),e(u)\big) - (q,u)_{L^2(\Gamma_N)} + \frac{G_c}{2\varepsilon}\Vert 1-\varphi\Vert^2+\frac{\varepsilon}{2}\Vert\nabla\varphi\Vert^2.\label{eq:TotalEnergyFunc}
\end{equation}
Here, $e(u)\defas\tfrac{1}{2}(\nabla u +\nabla u^\top)$, while $G_c>0$ and $\mathbb{C}$ denote the critical value for the elastic energy restitution rate and the elasticity tensor, respectively. 
Since the material is assumed to not heal over time, this minimization is subject to the irreversibility constraint
\begin{equation*}
    \varphi(t_2)\le\varphi(t_1)\quad\text{for all }t_1\le t_2.
\end{equation*}
The time-dependency of the system will be dealt with by an equidistant discretization $t_0,\ldots,t_M$ of $(0,T)$. The associated sequence of states is denoted by $(u^i,\varphi^i)_{i=1,\ldots,M}$ . The irreversibility constraint translates to $\varphi^i\le\varphi^{i-1}$ and will be treated via a penalization approach
\begin{equation*}
    R(\varphi^{i-1},\varphi)\defas\frac{1}{4}\big\Vert \max\{0,\varphi^i-\varphi^{i-1}\}\big\Vert_{L^4(\Omega)}^4.
\end{equation*}
\begin{definition}
Let
\begin{equation*}
    V\defas H_D^1(\Omega;\mathbb{R}^2)\times H^1(\Omega)\quad\text{and}\quad Q\defas L^2(\Gamma_N).
\end{equation*}
For initial data $(u^0,\varphi^0)\in V$ with $0\le\varphi^0\le 1$, a desired displacement $u_d\in (L^2(\Omega))^M$ and $\alpha,\gamma>0$, the optimization problem is given by
\begin{equation}
    \label{eq:TrackingBrittle}
    \begin{aligned}
        \min_{q\in Q^M}\quad & \frac{1}{2}\sum_{i=1}^M\Vert u^i-u_d^i\Vert^2 + \frac{\alpha}{2}\sum_{i=1}^M\Vert q^i\Vert^2_{L^2(\Gamma_N)} \\
        \text{s.t.}\quad & 0=\big(g(\varphi^i)\mathbb{C}e(u^i),e(v)\big) - (q^i,v)_{L^2(\Gamma_N)}, \\
        & 0=G_c\varepsilon\big(\nabla\varphi^i,\nabla\psi\big) + (1-\kappa)\big(\varphi^i\mathbb{C}e(u^i):e(u^i),\psi\big) \\
        &\qquad - \frac{G_c}{\varepsilon}(1-\varphi^i,\psi) + \gamma\big( \max\{0,\varphi^i-\varphi^{i-1}\}^3,\psi\big) \\
        &\qquad +\eta\big(\varphi^i-\varphi^{i-1},\psi\big)
    \end{aligned}
\end{equation}
for all $(v,\psi)\in V$ and $i=1,\ldots,M$.
\end{definition}
\begin{remark}
The minimization of the energy~\eqref{eq:TotalEnergyFunc} has been replaced by the corresponding first-order necessary optimality conditions and augmented by a viscous regularization term ${\eta(\varphi^i-\varphi^{i-1},\psi)}$ with $\eta\ge0$, see~\cite{neitzel2017optimal,Neitzel2019,HehlNeitzel2022}.
\end{remark}
\begin{lemma}
The optimization problem~\eqref{eq:TrackingBrittle} satisfies~\ref{A1}-\ref{A3}. Moreover, there exists at least one global minimizer to~\eqref{eq:TrackingBrittle} and each minimizer satisfies~\ref{A4}.
\end{lemma}
\begin{proof}
Again, it is easy to see that~\ref{A1} and~\ref{A2} are satisfied. Assumption~\ref{A3} was shown in~\cite{neitzel2017optimal,Neitzel2019,HehlNeitzel2022}. Existence of a global minimizer is given by~\cite[Theorem 4.3]{neitzel2017optimal}. The Fredholm property~\ref{A4} follows from~\cite[Corollary 5.5]{neitzel2017optimal}, where one utilized the fact that a minimizer of~\eqref{eq:TrackingBrittle} imposes the additional regularity
\begin{equation*}
    (u,\varphi)\in \big( V\cap \big(W^{1,p}(\Omega;\mathbb{R}^2) \times L^\infty(\Omega)\big)\big)^M,
\end{equation*}
see~\cite[Corollary 4.4]{neitzel2017optimal}.
\end{proof}
\begin{remark}
Since~\eqref{eq:TrackingBrittle} allows for the choice $K=\{0\}$,~\ref{A5} is equivalent to~\eqref{SOSC}, see~\Cref{rem:AssuErkl}. In~\cite{HehlNeitzel2022}, it was shown that this condition can be met under mild assumptions, if the viscous regularization parameter $\eta$ is chosen sufficiently large.
\end{remark}


\bibliographystyle{tfnlm}
\bibliography{Biblio}

\begin{thebibliography}{10}
\providecommand{\url}[1]{\normalfont{#1}}
\providecommand{\urlprefix}{Available from: }

\bibitem{TopOpt}
Bends\o~e~MP, Sigmund~O. Topology optimization. Springer-Verlag, Berlin; 2003.
  Theory, methods and applications.

\bibitem{sokolowski1992introduction}
Soko{\l}owski~J, Zol\'{e}sio~JP. Introduction to shape optimization. (Springer
  Series in Computational Mathematics; Vol.~16). Springer-Verlag, Berlin; 1992.
  Shape sensitivity analysis.

\bibitem{borzi2011computational}
Borz{\`\i}~A, Schulz~V. Computational optimization of systems governed by
  partial differential equations. (Computational Science \& Engineering;
  Vol.~8). Society for Industrial and Applied Mathematics (SIAM), Philadelphia,
  PA; 2012.

\bibitem{OptPDE}
Hinze~M, Pinnau~R, Ulbrich~M, et~al. Optimization with {PDE} constraints.
  (Mathematical Modelling: Theory and Applications; Vol.~23). Springer, New
  York; 2009.

\bibitem{khapalov2010controllability}
Khapalov~AY. Controllability of partial differential equations governed by
  multiplicative controls. (Lecture Notes in Mathematics; Vol. 1995).
  Springer-Verlag, Berlin; 2010.

\bibitem{Lions:1971}
Lions~JL. Optimal control of systems governed by partial differential
  equations. 1st ed. Berlin -- Heidelberg -- New York: Springer; 1971. Die
  Grundlehren der mathematischen Wissenschaften.

\bibitem{Troeltzsch:2010}
Tr\"oltzsch~F. Optimal control of partial differential equations. (Graduate
  Studies in Mathematics; Vol. 112). American Mathematical Society, Providence,
  RI; 2010.

\bibitem{Allaire2011}
Allaire~G, Jouve~F, Van~Goethem~N. Damage and fracture evolution in brittle
  materials by shape optimization methods. J Comput Phys.
  2011;\hspace{0pt}230(12):5010--5044.

\bibitem{BourdinFrancfortMarigot:2008}
Bourdin~B, Francfort~GA, Marigo~JJ. The variational approach to fracture. J
  Elasticity. 2008;\hspace{0pt}91(1-3):5--148.

\bibitem{FrancfortMarigo:1998}
Francfort~GA, Marigo~JJ. Revisiting brittle fracture as an energy minimization
  problem. J Mech Phys Solids. 1998;\hspace{0pt}46(8):1319--1342.

\bibitem{neitzel2017optimal}
Neitzel~I, Wick~T, Wollner~W. An optimal control problem governed by a
  regularized phase-field fracture propagation model. SIAM J Control Optim.
  2017;\hspace{0pt}55(4):2271--2288.

\bibitem{Neitzel2019}
Neitzel~I, Wick~T, Wollner~W. An optimal control problem governed by a
  regularized phase-field fracture propagation model. {P}art {II}: {T}he
  regularization limit. SIAM J Control Optim.
  2019;\hspace{0pt}57(3):1672--1690.

\bibitem{phelps1985metric}
Phelps~RR. Metric projections and the gradient projection method in {B}anach
  spaces. SIAM J Control Optim. 1985;\hspace{0pt}23(6):973--977.

\bibitem{alt1992sequential}
Alt~W. Sequential quadratic programming in {B}anach spaces. In: Advances in
  optimization ({L}ambrecht, 1991). (Lecture Notes in Econom. and Math.
  Systems; Vol. 382). Springer, Berlin; 1992. p. 281--301.

\bibitem{gwinner1981penalty}
Gwinner~J. On the penalty method for constrained variational inequalities. In:
  Optimization: theory and algorithms ({C}onfolant, 1981). (Lecture Notes in
  Pure and Appl. Math.; Vol.~86). Dekker, New York; 1983. p. 197--211.

\bibitem{Bonnans2000}
Bonnans~JF, Shapiro~A. Perturbation analysis of optimization problems.
  Springer-Verlag, New York; 2000. Springer Series in Operations Research.

\bibitem{KKToriginal}
Kuhn~HW, Tucker~AW. Nonlinear programming. In: Proceedings of the {S}econd
  {B}erkeley {S}ymposium on {M}athematical {S}tatistics and {P}robability,
  1950. University of California Press, Berkeley-Los Angeles, Calif.; 1951. p.
  481--492.

\bibitem{robinson1976stability}
Robinson~SM. Stability theory for systems of inequalities. {II}.
  {D}ifferentiable nonlinear systems. SIAM J Numer Anal.
  1976;\hspace{0pt}13(4):497--513.

\bibitem{ComplementaryFinite}
Scheel~H, Scholtes~S. Mathematical programs with complementarity constraints:
  stationarity, optimality, and sensitivity. Math Oper Res.
  2000;\hspace{0pt}25(1):1--22.

\bibitem{ComplementaryInfinte}
Wachsmuth~G. Mathematical programs with complementarity constraints in {B}anach
  spaces. J Optim Theory Appl. 2015;\hspace{0pt}166(2):480--507.

\bibitem{wright1998superlinear}
Wright~SJ. Superlinear convergence of a stabilized {SQP} method to a degenerate
  solution. Comput Optim Appl. 1998;\hspace{0pt}11(3):253--275.

\bibitem{MFCQref}
Mangasarian~OL, Fromovitz~S. The {F}ritz {J}ohn necessary optimality conditions
  in the presence of equality and inequality constraints. J Math Anal Appl.
  1967;\hspace{0pt}17:37--47.

\bibitem{hager1999stabilized}
Hager~WW. Stabilized sequential quadratic programming. Vol.~12; 1999. p.
  253--273. Computational optimization---a tribute to Olvi Mangasarian, Part I.

\bibitem{Hager1999}
Hager~WW, Gowda~MS. Stability in the presence of degeneracy and error
  estimation. Math Program. 1999;\hspace{0pt}85(1, Ser. A):181--192.

\bibitem{ReviewReference}
Yamakawa~Y. A stabilized sequential quadratic programming method for
  optimization problems in function spaces. Numerical Functional Analysis and
  Optimization. 2023;\hspace{0pt}44(9):867--905.

\bibitem{SOSCPaper}
Maurer~H, Zowe~J. First and second order necessary and sufficient optimality
  conditions for infinite-dimensional programming problems. Math Programming.
  1979;\hspace{0pt}16(1):98--110.

\bibitem{EkelandTemam:1999}
Ekeland~I, T{\'e}mam~R. Convex analysis and variational problems. English ed.
  (Classics in Applied Mathematics; Vol.~28). Philadelphia, PA: Society for
  Industrial and Applied Mathematics (SIAM); 1999.

\bibitem{Boffi2013}
Boffi~D, Brezzi~F, Fortin~M. Mixed finite element methods and applications.
  (Springer Series in Computational Mathematics; Vol.~44). Springer,
  Heidelberg; 2013.

\bibitem{Wloka:1987}
Wloka~J. Partial differential equations. Cambridge University Press; 1987.

\bibitem{Ngai2005}
van Ngai~H, Th\'{e}ra~M. Error bounds for convex differentiable inequality
  systems in {B}anach spaces. Math Program. 2005;\hspace{0pt}104(2-3, Ser.
  B):465--482.

\bibitem{Dontchev1993a}
Dontchev~AL, Hager~WW. Lipschitzian stability in nonlinear control and
  optimization. SIAM J Control Optim. 1993;\hspace{0pt}31(3):569--603.

\bibitem{AgraFredholm}
Agranovich~MS. Sobolev spaces, their generalizations and elliptic problems in
  smooth and {L}ipschitz domains. Springer, Cham; 2015. Springer Monographs in
  Mathematics; revised translation of the 2013 Russian original.

\bibitem{HehlNeitzel2022}
Hehl~A, Neitzel~I. Second-order optimality conditions for an optimal control
  problem governed by a regularized phase-field fracture propagation model.
  Optimization. 2022;\hspace{0pt}72(6):1665–1689.

\bibitem{Groeger}
Gr\"oger~K. A {$W^{1,p}$}-estimate for solutions to mixed boundary value
  problems for second order elliptic differential equations. Math Ann.
  1989;\hspace{0pt}283(4):679--687.

\end{thebibliography}

\end{document}